\documentclass[12pt,a4paper]{amsart}
\usepackage[utf8]{inputenc}
\usepackage[T1]{fontenc}

\usepackage[cm]{fullpage}
\usepackage[dvipsnames]{xcolor}
\usepackage{url}

\usepackage{amsfonts, amsthm, amssymb,verbatim,amscd,amsmath, blindtext}

\usepackage{multirow,multicol}
\usepackage{graphicx}
\usepackage{charter,textcomp,csquotes,comment}
\usepackage{enumitem,yhmath}
\usepackage{ytableau}
\usepackage{float, pifont}

\usepackage{booktabs}
\usepackage{mathtools,nccmath}
\usepackage{float}
\usepackage[pagebackref,colorlinks=true,citecolor=blue,linkcolor=red,urlcolor=blue]{hyperref}
\newtheorem{theorem}{Theorem}[section]
\newtheorem{lemma}[theorem]{Lemma}
\newtheorem{corollary}[theorem]{Corollary}
\newtheorem{proposition}[theorem]{Proposition}

\theoremstyle{remark}
\newtheorem{remark}[theorem]{Remark}

\theoremstyle{definition}
\newtheorem{definition}[theorem]{Definition}

\newcommand{\R}{\mathbb{R}}
\newcommand{\Rs}{\mathcal{R}}
\newcommand{\1}{\mathbf{1}}

\newcommand{\bx}{\mathbf{x}}

\newcommand{\calR}{\mathcal{R}}
\newcommand{\tr}{\operatorname{tr}}

\newcommand{\norm}[1]{\left\|#1\right\|}

\title{Spectral Properties of the Resistance Laplacian with Applications to Data Clustering and Anomaly Detection}

\author[P. Deshpande]{Priyavrat Deshpande}
\address{Chennai Mathematical Institute, India}
\email{pdeshpande@cmi.ac.in}

\author[G. Lather]{Gargi Lather}
\address{Chennai Mathematical Institute, India}
\email{gargilather@gmail.com}
\dedicatory{The paper is dedicated to the memory of Professor R. B. Bapat}

\begin{document}

\begin{abstract}
The resistance Laplacian is a graph matrix associated with the effective resistance metric and provides a global counterpart of the classical graph Laplacian. 
Although it inherits several fundamental properties of the ordinary Laplacian, including a connected graph partitioning theorem analogous to that of Fiedler, its intrinsic spectral structure has remained largely unexplored.

In this paper, we develop a structural theory of the resistance Laplacian. 
We derive a canonical decomposition that separates its intrinsic, average, and deviation components, thereby revealing how the global geometry induced by effective resistance differs from the local geometry encoded by the ordinary Laplacian.
Building upon this decomposition, we establish several structural and spectral properties of the associated deviation operator, obtain variational characterizations of the largest eigenvalue and its corresponding eigenspace, and express the resistance Laplacian in Laplacian coordinates, thereby elucidating the relationship between the eigenspaces of the two operators. 

Finally, we formulate resistance-based graph partitioning objectives whose spectral relaxations recover the dominant eigenvector of the resistance Laplacian, providing a variational interpretation of the connected partition theorem. 
Experimental results on synthetic and real-life datasets demonstrate the effectiveness of the proposed framework for graph partitioning, data clustering, and exploratory anomaly detection.
\end{abstract}
\keywords{resistance Laplacian, effective resistance, Moore--Penrose inverse, graph partitioning, spectral clustering.}
\subjclass[2020]{05C50, 05C40, 05C85}
\maketitle

\section{Introduction}
Spectral graph theory investigates the interplay between the combinatorial structure of a graph and the spectral properties of matrices naturally associated with it.
Among these, the graph Laplacian has emerged as one of the most fundamental objects, owing to its deep connections with graph connectivity, random walks, electrical networks, and discrete potential theory. 
Its eigenvalues and eigenvectors provide a geometric description of the underlying graph, revealing information about connectivity, diffusion, and community structure, and forming the basis of numerous spectral algorithms for graph partitioning, clustering, dimensionality reduction, and network analysis.
In particular, Fiedler's celebrated characterization of the second smallest Laplacian eigenvalue and its associated eigenvector established the foundation of modern spectral partitioning algorithms. We refer the reader to the monographs of Chung~\cite{Chung1997}, Mohar~\cite{Mohar1991}, and Brouwer and Haemers~\cite{brouwer2011spectra} for comprehensive accounts of spectral graph theory and its applications.

Many of these developments, however, are based on a fundamentally local notion of graph geometry. The graph Laplacian is constructed directly from adjacency information, and consequently its spectral properties primarily reflect local interactions between neighboring vertices. 
While this local viewpoint has proved remarkably successful, it does not explicitly capture the cumulative influence of multiple paths connecting distant parts of a graph. 
In many situations, particularly those involving information flow, network robustness, or global connectivity, one seeks a spectral description that reflects the graph as a whole rather than only its local adjacency structure.

One natural approach to incorporating global connectivity is through the effective resistance metric introduced by Klein and Randi\'c~\cite{Klein}. Viewing every edge as a unit electrical resistor, the effective resistance between two vertices measures the potential difference required to sustain a unit current between them. Unlike the classical shortest-path distance, effective resistance simultaneously incorporates the contribution of all connecting paths and therefore provides a genuinely global measure of accessibility within the graph. 
This perspective has led to a number of resistance-based graph invariants. 
See, for example, \cite{Bapat2010, bapat2014, parab2024}.

Effective resistance has proved to be a remarkably versatile graph concept, extending far beyond its origins in electrical network theory. 
The associated resistance matrix has been extensively investigated from combinatorial, algebraic, and spectral viewpoints, with connections to random walks, network reliability, chemical graph theory, and mathematical chemistry. 
Closely related is the Moore--Penrose inverse of the graph Laplacian, whose entries encode effective resistances and play a fundamental role in the study of resistance distances, Kirchhoff indices, spanning trees, diffusion processes, and discrete potential theory. These developments have established effective resistance and the Laplacian pseudoinverse as indispensable tools for understanding the global geometry of graphs.

Building on the classical Laplacian, Aouchiche and Hansen \cite{Aouchiche2013} introduced  the \emph{distance Laplacian matrix} as a spectral tool encoding metric information about the graph. 
For a connected graph $G$, the \emph{distance} $d_{ij}$ between 
vertices $i$ and $j$ is the length of a shortest path between them, and the  \emph{distance matrix} is $\mathcal{D} := [d_{ij}]$. 
The \emph{transmission} of vertex $i$ is  defined as $\mathrm{tr}_i := \sum_{j=1}^n d_{ij}$, and one sets $\mathrm{Tr} := \mathrm{diag}(\mathrm{tr}_1, \dots, \mathrm{tr}_n)$. 
The \emph{distance Laplacian matrix} of $G$ is then $L^D := \mathrm{Tr} - \mathcal{D}$.

Let $y = (y_1, \dots, y_n)^\top$ be an eigenvector corresponding to the largest 
eigenvalue of $L^D$, and define $Y_+ := \{i : y_i \geq 0\}$ and 
$Y_- := \{i : y_i \leq 0\}$. It has been proved for paths \cite{nath, bapat2014} that the subgraphs induced by
$Y_+$ and $Y_-$ are connected. Because resistance distance equals
graph distance on a tree, the result of \cite[Theorem~3.1]{gupta} also
yields the corresponding distance-Laplacian statement for trees.
However, this property does not hold for general graphs.

Following this paradigm, the resistance Laplacian, introduced independently by Parab and Dsouza \cite{parab2024}, and by Gupta, Lather, Balaji, and Kurata~\cite{gupta2} provides a spectral operator associated with this global geometry. 
It is obtained by replacing the shortest-path distance in the distance Laplacian construction with the effective resistance metric. Like the ordinary Laplacian, the resistance Laplacian is symmetric, positive semidefinite, and has the all-ones vector as an eigenvector corresponding to the eigenvalue zero. For general graphs, it can be viewed as a cycle‑sensitive deformation of the distance Laplacian.

More significantly, Gupta \emph{et al.} \cite[Theorem 3.1]{gupta} established that the sign partition induced by an eigenvector corresponding to its largest eigenvalue always produces connected induced subgraphs, providing an effective-resistance analogue of Fiedler's classical theorem.

Despite this striking partition theorem, the resistance Laplacian itself remains comparatively poorly understood. Unlike the ordinary Laplacian, whose spectral theory is now well developed, the resistance Laplacian is defined indirectly through effective resistance and therefore through the Moore--Penrose inverse of the graph Laplacian. Consequently, many fundamental questions remain open. What structural information is encoded by its spectrum? How does it relate to the ordinary Laplacian? Which graph parameters determine the deviation between their eigenspaces? More generally, what aspects of graph geometry are captured by the resistance Laplacian that are invisible to the ordinary Laplacian?

The present paper addresses these questions by developing a structural theory of the resistance Laplacian. Rather than viewing it solely as a matrix whose dominant eigenvector yields a graph partition, we study it as a spectral object in its own right. Our starting point is a canonical decomposition that separates the contribution of the Moore--Penrose inverse of the graph Laplacian from the contribution arising from the non-uniformity of the resistance centralities. This decomposition provides a transparent description of the geometry encoded by the resistance Laplacian and serves as the foundation for the spectral and variational results developed in the sequel.

The principal contribution of this paper is the development of a structural theory for the resistance Laplacian. We first derive a canonical decomposition expressing the resistance Laplacian as the sum of three naturally occurring components: an intrinsic term determined by the Moore--Penrose inverse of the graph Laplacian, an average correction depending only on the mean resistance centrality, and a deviation term capturing the non-uniformity of the resistance centralities. This decomposition provides a unified framework for understanding the relationship between the ordinary and resistance Laplacians and serves as the foundation for the subsequent analysis.

Building upon this decomposition, we develop the spectral theory of the deviation operator. We characterize its spectrum, determine its inertia, obtain explicit variational descriptions of its quadratic form, and identify the role played by the resistance deviations in governing its eigenstructure. Restricting the resistance Laplacian to the orthogonal complement of the all-ones vector then yields a particularly transparent representation in Laplacian coordinates, revealing how the resistance geometry modifies the classical Laplacian eigenspaces through a deviation-induced coupling term.

These structural results naturally lead to new algorithmic interpretations. 
We formulate resistance-based graph partitioning objectives whose spectral relaxations recover a dominant eigenvector of the resistance Laplacian, thereby providing a variational interpretation of the partitioning theorem of Gupta, Lather and Balaji \cite{gupta}. 
We further illustrate the resulting framework through graph partitioning, data clustering, and exploratory anomaly detection experiments, demonstrating that the resistance Laplacian provides a genuinely different geometric representation from that arising from the ordinary graph Laplacian.

The remainder of the paper is organized as follows. Section \ref{sec:spectral} develops the canonical decomposition of the resistance Laplacian together with its structural, spectral, and variational theory. 
Section \ref{sec:applications} applies this framework to graph partitioning, data clustering, and exploratory anomaly detection, and discusses the associated computational implications. 
We conclude in Section \ref{sec:conclusion} with directions for future research.

\section{Structural and Spectral Theory of the Resistance Laplacian}\label{sec:spectral}

Throughout this paper, unless stated otherwise, $G=(V,E)$ denotes a
finite, simple, connected, undirected, unweighted graph on $n$
vertices, where $V=\{1,\ldots,n\}$. Let $A=(a_{ij})_{i,j=1}^{n}$ be the adjacency matrix of $G$, where $a_{ij}=1$ if
$\{i,j\}\in E$ and $a_{ij}=0$ otherwise. Let $D_G=\operatorname{diag}\bigl(\deg(1),\ldots,\deg(n)\bigr)$ be the degree matrix of $G$. The \emph{combinatorial Laplacian matrix}
of $G$ is
\[
L=D_G-A.
\]

Since $G$ is connected, $\ker L=\operatorname{span}\{\mathbf{1}\},$ where $\mathbf{1}$ denotes the all-ones vector. The
\emph{Moore--Penrose inverse} of $L$, denoted by $L^\dagger$, is the
unique symmetric matrix satisfying
\[
LL^\dagger=L^\dagger L
=
I-\frac{1}{n}\mathbf{1}\mathbf{1}^{\mathsf T},
\qquad
L^\dagger\mathbf{1}=\mathbf{0},
\]
where $I$ denotes the identity matrix of order $n$.

Regarding every edge of $G$ as a unit resistor, the effective
resistance between vertices $i$ and $j$ is given by
\cite[Section~9.1]{Bapat2010}
\[
r_{ij}
=
(L^\dagger)_{ii}
+
(L^\dagger)_{jj}
-
2(L^\dagger)_{ij}.
\]
The corresponding \emph{resistance matrix} is $R=(r_{ij})_{1\leq i,j\leq n}.$ The \emph{resistance Laplacian} of $G$, introduced independently in
\cite{gupta2,parab2024}, is defined by
\begin{equation}
\mathcal{R}
=
\operatorname{diag}(R\mathbf{1})-R.
\label{eqn:resistance laplacian}
\end{equation}
The resistance Laplacian has previously been studied from the perspective of explicit matrix representations, spectral properties for special graph families, and associated graph energies \cite{parab2024}. 
In contrast, our objective is to understand its intrinsic structure for arbitrary connected graphs. 
The key observation underlying our approach is that the resistance Laplacian admits a canonical decomposition governed by the diagonal of the Moore--Penrose inverse of the graph Laplacian.



\subsection{Canonical Decomposition and the Deviation Operator}

The deviation term in the canonical decomposition is governed entirely by the diagonal of the Moore--Penrose inverse. 
We therefore begin by isolating this diagonal and decomposing it into its average and deviation components.
Let
\[
D=\operatorname{diag}(d_1,\ldots,d_n), \;\text{where} \; d_i=(L^\dagger)_{ii}.
\]
The vector $d=(d_1,\ldots,d_n)^\top$ is called the \emph{resistance centrality vector},
while $D$ is the \emph{resistance diagonal matrix}.
We further define the \emph{average resistance centrality} as
\[
\bar d :=\frac1n\operatorname{tr}(L^\dagger),
\]
and define the \emph{resistance deviation matrix} $\Delta$ as
\[
\Delta := D-\bar d I.
\]
Its diagonal entries $\delta_1, \dots, \delta_n$ are called \emph{resistance deviations}. 
Observe that $\Delta$ is trace-free. 
We write
\[
\delta=(\delta_1,\ldots,\delta_n)^{\top}.
\]
For each vertex $i$, let
\[
t_i=\sum_{j=1}^{n}r_{ij}
\]
denote its \emph{resistance transmission}, and let
\[
\bar t=\frac{1}{n}\sum_{i=1}^{n}t_i
\]
denote the average resistance transmission. Since
$L^\dagger\mathbf{1}=\mathbf{0}$, the effective-resistance formula gives
\[
t_i=nd_i+\operatorname{tr}(L^\dagger),
\qquad
\bar t=2\operatorname{tr}(L^\dagger),
\qquad
t_i-\bar t=n\delta_i.
\]
Thus, $\delta_i$ is the resistance-transmission deviation of vertex $i$,
normalized by the factor $n$. Vertices with $\delta_i<0$ are relatively
central in the resistance metric and will be referred to as \emph{globally
accessible}, whereas vertices with $\delta_i>0$ are relatively peripheral
and will be referred to as \emph{globally remote}.
Though these terms are introduced here, we will need them in the next section.

\begin{figure}
    \centering
    \includegraphics[width=0.5\linewidth]{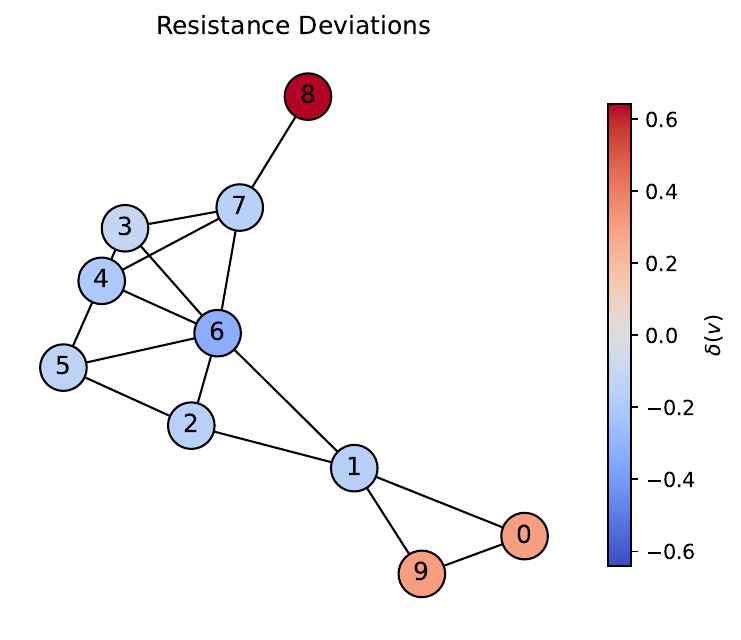}
    \caption{Resistance deviation on a graph}
    \label{fig:wheeldelta}
\end{figure}

Figure \ref{fig:wheeldelta} illustrates the resistance deviations on a sample graph. 
The hub-like vertex (numbered $6$) has the most negative deviation, whereas the leaf vertex (numbered $8$) has the largest positive deviation.

Finally, let
\[
P=I-\frac1nJ, 
\]
where $J=\mathbf{1}\mathbf{1}^{\top}$ is the all-ones matrix. Then $P$
is the orthogonal projection onto the subspace $\mathbf{1}^\perp$.

\begin{lemma}
The resistance matrix admits the decomposition

\[
R=DJ+JD-2L^\dagger.
\]
\end{lemma}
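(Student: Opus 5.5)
The plan is a direct entrywise comparison of both sides, using only the effective-resistance formula $r_{ij}=(L^\dagger)_{ii}+(L^\dagger)_{jj}-2(L^\dagger)_{ij}$ recalled above and the definition $D=\operatorname{diag}(d_1,\ldots,d_n)$ with $d_i=(L^\dagger)_{ii}$. No spectral information about $L^\dagger$ is needed beyond these identities.

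First I will compute the $(i,j)$ entries of the two rank-type terms on the right-hand side. Since $J=\mathbf{1}\mathbf{1}^{\top}$ has every entry equal to $1$ and $D$ is diagonal, left multiplication by $D$ scales row $i$ of $J$ by $d_i$, so $(DJ)_{ij}=d_i$. Likewise, right multiplication by $D$ scales column $j$, so $(JD)_{ij}=d_j$. Hence
\[
(DJ+JD-2L^\dagger)_{ij}=d_i+d_j-2(L^\dagger)_{ij}=(L^\dagger)_{ii}+(L^\dagger)_{jj}-2(L^\dagger)_{ij}=r_{ij}.
\]
This holds for every pair $i,j$, including $i=j$: there both sides vanish, because $d_i+d_i-2(L^\dagger)_{ii}=0=r_{ii}$.

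Since the identity holds entrywise, the matrix equality $R=DJ+JD-2L^\dagger$ follows. The same identity can be written more compactly as $R=d\mathbf{1}^{\top}+\mathbf{1}d^{\top}-2L^\dagger$, which is how it will be used later when projecting with $P$.

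There is no real obstacle here. The only point requiring care is the bookkeeping that $DJ$ produces the row-constant matrix while $JD$ produces the column-constant one, so that together they account for both diagonal terms in the resistance formula. The identity is therefore essentially a restatement of the effective-resistance formula in matrix form.
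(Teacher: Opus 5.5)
Your proof is correct and is essentially the paper's argument. The paper writes the effective-resistance formula as $R=d\mathbf{1}^{\top}+\mathbf{1}d^{\top}-2L^\dagger$ and uses $d=D\mathbf{1}$ to identify $d\mathbf{1}^{\top}=DJ$ and $\mathbf{1}d^{\top}=JD$, which is your entrywise computation $(DJ)_{ij}=d_i$, $(JD)_{ij}=d_j$ written in outer-product form.
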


\begin{proof}
First, write the effective-resistance formula in matrix form as follows
\[
R=d\mathbf{1}^\top+\mathbf{1}d^\top-2L^\dagger.
\]
Using $d=D\mathbf{1}$, we obtain
\[
d\mathbf{1}^\top=D\mathbf{1}\mathbf{1}^\top=DJ,
\]
and similarly,
\[
\mathbf{1}d^\top=JD.
\]
Substituting these expressions gives the desired identity.
\end{proof}

The preceding lemma immediately yields a convenient matrix expression for
the resistance Laplacian.

\begin{proposition}
The resistance Laplacian satisfies

\[ \mathcal{R} = 2L^\dagger + nD + \operatorname{tr}(L^\dagger)I - DJ -JD.
\]
\end{proposition}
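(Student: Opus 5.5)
The plan is to substitute the preceding lemma's expression $R=DJ+JD-2L^\dagger$ into the definition \eqref{eqn:resistance laplacian}, $\mathcal{R}=\operatorname{diag}(R\mathbf{1})-R$. The only real work is computing the row-sum vector $R\mathbf{1}$ and turning it into a diagonal matrix.

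First I compute $R\mathbf{1}$ term by term. We have $J\mathbf{1}=n\mathbf{1}$, so $DJ\mathbf{1}=nD\mathbf{1}=nd$. Next, $JD\mathbf{1}=\mathbf{1}\mathbf{1}^\top d=\operatorname{tr}(L^\dagger)\mathbf{1}$, because $\mathbf{1}^\top d=\sum_i (L^\dagger)_{ii}=\operatorname{tr}(L^\dagger)$. Finally, $L^\dagger\mathbf{1}=\mathbf{0}$ by the defining property of the Moore--Penrose inverse recalled above. Hence
\[
R\mathbf{1}=nd+\operatorname{tr}(L^\dagger)\mathbf{1},
\]
which is consistent with the transmission formula $t_i=nd_i+\operatorname{tr}(L^\dagger)$ already noted in the text.

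Taking the diagonal gives $\operatorname{diag}(R\mathbf{1})=nD+\operatorname{tr}(L^\dagger)I$, since $\operatorname{diag}(d)=D$ and $\operatorname{diag}(\mathbf{1})=I$. Subtracting $R=DJ+JD-2L^\dagger$ then yields
\[
\mathcal{R}=nD+\operatorname{tr}(L^\dagger)I-DJ-JD+2L^\dagger,
\]
which is the claimed identity after reordering the terms.

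There is no genuine obstacle. The only point requiring care is the evaluation of $JD\mathbf{1}$: it must be recognized as the scalar $\mathbf{1}^\top d=\operatorname{tr}(L^\dagger)$ times $\mathbf{1}$, and not confused with $DJ\mathbf{1}=nd$.
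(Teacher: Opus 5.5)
Your proof is correct and follows essentially the same route as the paper: substitute $R=DJ+JD-2L^\dagger$, compute $R\mathbf{1}=nd+\operatorname{tr}(L^\dagger)\mathbf{1}$ using $\mathbf{1}^\top d=\operatorname{tr}(L^\dagger)$, take the diagonal, and subtract $R$. You also state explicitly that the $L^\dagger\mathbf{1}$ term vanishes, a step the paper leaves implicit.
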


\begin{proof}
In view of Equation \ref{eqn:resistance laplacian}, we compute the row sums of $R$.
Using the previous lemma, we get:

\[
R\mathbf{1}
=
nD\mathbf{1}
+
Jd.
\]

Finally,

\[
Jd
=
(\mathbf{1}^\top d)\mathbf{1}
=
\operatorname{tr}(L^\dagger)\mathbf{1},
\]

and therefore

\[
\operatorname{diag}(R\mathbf{1})
=
nD+\operatorname{tr}(L^\dagger)I.
\]

Substituting into the definition of $\mathcal{R}$ completes the proof.
\end{proof}

The preceding expression can be simplified further by separating the
average and fluctuating parts of the resistance diagonal matrix.

Using the definition of the resistance deviation matrix, introduced above, we get
\[
DJ+JD = 2\bar d\,J+\Delta J+J\Delta,
\]
and
\[
nD+\operatorname{tr}(L^\dagger)I = 2n\bar d\,I+n\Delta.
\]

Substituting these identities into the previous proposition yields
\[
\mathcal{R} = 2L^\dagger + 2n\bar d\,I - 2\bar d\,J + n\Delta - \Delta J - J\Delta. 
\]

We have
\[
J=n(I-P),
\]
therefore
\[
\Delta J=n\Delta(I-P) \;\text{and}\; J\Delta=n(I-P)\Delta.
\]

Hence
\[
\Delta J+J\Delta
=
2n\Delta
-
n(\Delta P+P\Delta).
\]

Substituting this identity gives
\[
\mathcal{R} = 2L^\dagger + 2n\bar d\,P + n(\Delta P+P\Delta-\Delta).
\]

\begin{definition}
The matrix
\[
T=\Delta P+P\Delta-\Delta
\]
will be called the \emph{resistance deviation operator} associated
with $G$.
\end{definition}

Since $\Delta$ is diagonal and trace-free,
\[
J\Delta J
=
\mathbf{1}
\bigl(\mathbf{1}^{\top}\Delta\mathbf{1}\bigr)
\mathbf{1}^{\top}
=
\operatorname{tr}(\Delta)J
=
\mathbf{0}.
\]
Using $P=I-\frac{1}{n}J$, we therefore obtain
\[
\begin{aligned}
P\Delta P
&=
\left(I-\frac{1}{n}J\right)
\Delta
\left(I-\frac{1}{n}J\right)\\
&=
\Delta-\frac{1}{n}\Delta J-\frac{1}{n}J\Delta
+\frac{1}{n^2}J\Delta J\\
&=
\Delta-\frac{1}{n}\Delta J-\frac{1}{n}J\Delta\\
&=
\Delta P+P\Delta-\Delta\\
&=
T.
\end{aligned}
\]
Thus,
\[
T=P\Delta P.
\]

We say that $G$ is \emph{resistance-transmission regular} if
\[
t_1=t_2=\cdots=t_n.
\]
The resistance deviation operator vanishes precisely when $G$ is
resistance-transmission regular. More explicitly,
\[
T=\mathbf{0}
\quad\Longleftrightarrow\quad
\Delta=\mathbf{0}
\quad\Longleftrightarrow\quad
t_1=t_2=\cdots=t_n.
\]
Indeed, $\Delta=\mathbf{0}$ immediately implies $T=\mathbf{0}$.
Conversely, for $n\geq 3$, the diagonal entries of $T$ are
\[
T_{ii}=\left(1-\frac{2}{n}\right)\delta_i.
\]
Hence, $T=\mathbf{0}$ implies $\delta_i=0$ for every $i$, and therefore
$\Delta=\mathbf{0}$. The cases $n=1$ and $n=2$ are immediate, since
every connected graph of order at most two is
resistance-transmission regular. Finally, the transmission identity
established above shows that $\Delta=\mathbf{0}$ is equivalent to
$t_1=\cdots=t_n$.

We have therefore established the following canonical decomposition.

\begin{theorem}[Canonical decomposition]
\label{canonical decomposition}
Let $G$ be a connected graph. Then
\[
\mathcal{R}=2L^\dagger+2n\bar d\,P+nT.
\]
\end{theorem}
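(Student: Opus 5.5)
The plan is to assemble the identities derived just before the statement into a single chain; no new ideas are needed beyond bookkeeping. I will start from the matrix expression in the preceding proposition,
\[
\mathcal{R}=2L^\dagger+nD+\operatorname{tr}(L^\dagger)I-DJ-JD,
\]
which itself rests on the lemma $R=DJ+JD-2L^\dagger$ and the row-sum computation $\operatorname{diag}(R\mathbf{1})=nD+\operatorname{tr}(L^\dagger)I$. Next I substitute $D=\bar d I+\Delta$ and use $\operatorname{tr}(L^\dagger)=n\bar d$. This gives $nD+\operatorname{tr}(L^\dagger)I=2n\bar d\,I+n\Delta$ and $DJ+JD=2\bar d\,J+\Delta J+J\Delta$. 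Hence
\[
\mathcal{R}=2L^\dagger+2\bar d\,(nI-J)+n\Delta-\Delta J-J\Delta .
\]

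For the second step I use $J=n(I-P)$. The term $2\bar d(nI-J)$ becomes $2n\bar d\,P$. The remaining terms become $n\Delta-n\Delta(I-P)-n(I-P)\Delta=n(\Delta P+P\Delta-\Delta)=nT$, by the definition of the resistance deviation operator. This yields $\mathcal{R}=2L^\dagger+2n\bar d\,P+nT$.

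I do not expect a genuine obstacle. The only points to watch are the following.
\begin{itemize}
\item The sign bookkeeping in $n\Delta-\Delta J-J\Delta$.
\item The scalar identity $\bar t=2\operatorname{tr}(L^\dagger)$ is not needed; only $\mathbf{1}^\top d=\operatorname{tr}(L^\dagger)=n\bar d$ is used.
\end{itemize}

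As a sanity check, I will verify that both sides annihilate $\mathbf{1}$. On the left this holds because $\mathcal{R}$ is a Laplacian-type matrix. On the right it holds because $L^\dagger\mathbf{1}=\mathbf{0}$, $P\mathbf{1}=\mathbf{0}$, and $T=P\Delta P$ kills $\mathbf{1}$. I will also note that, by the computation above, $T$ may equivalently be written as $P\Delta P$, which makes the symmetry of each summand evident.
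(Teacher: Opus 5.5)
Your proposal is correct and is essentially the paper's own derivation: you start from $\mathcal{R}=2L^\dagger+nD+\operatorname{tr}(L^\dagger)I-DJ-JD$, substitute $D=\bar d\,I+\Delta$, and use $J=n(I-P)$ to turn $2\bar d(nI-J)$ into $2n\bar d\,P$ and $n\Delta-\Delta J-J\Delta$ into $nT$. Your side remark that $T=P\Delta P$ does not follow from your computation alone, since it needs $\operatorname{tr}\Delta=0$ (which the paper checks separately), but it is not used in the decomposition itself.
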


The first term in the above decomposition is intrinsic, the second is
isotropic on $\mathbf{1}^{\perp}$, while the third measures departures
from resistance-transmission regularity. Note that
\[
n\bar d=\tr(L^\dagger).
\]
The canonical decomposition isolates the contribution of the resistance
deviations in the operator $T$. The remainder of this section develops
the algebraic, spectral, and variational properties of $T$. We begin
with its spectrum. By the identity $T=P\Delta P$ established above,
the restriction of $T$ to $\mathbf{1}^{\perp}$ is the compression of
the diagonal matrix $\Delta$ to $\mathbf{1}^{\perp}$. Consequently,
its eigenvectors in $\mathbf{1}^{\perp}$ arise either from vectors
supported on a single level set of the deviations and having coordinate
sum zero, or from vectors whose coordinates are proportional to
$(\delta_i-\lambda)^{-1}$ and satisfy a global zero-sum condition.
The following theorem makes this description precise.

\begin{theorem}\label{thm:spectrumT}
Let $\alpha_1<\cdots<\alpha_k$ be the distinct resistance deviations,
where $\alpha_i$ occurs with multiplicity $m_i$, and let
\[
I_i=\{j\in\{1,\ldots,n\}:\delta_j=\alpha_i\}.
\]
Then $T\mathbf{1}=\mathbf{0}$. Moreover, for each $i$, the set
\[
E_i=
\left\{
x\in\mathbb{R}^n:
\operatorname{supp}(x)\subseteq I_i,\ 
\mathbf{1}^{\top}x=0
\right\}
\]
is precisely the set of solutions in $\mathbf{1}^{\perp}$ of
\[
Tx=\alpha_i x,
\]
and
\[
\dim E_i=m_i-1.
\]

The remaining $k-1$ eigenvalues of
$\left.T\right|_{\mathbf{1}^{\perp}}$ are simple and are precisely the
roots of
\[
f(\lambda)
=
\sum_{j=1}^{n}\frac{1}{\delta_j-\lambda}
=
\sum_{i=1}^{k}\frac{m_i}{\alpha_i-\lambda}.
\]
For each $i\in\{1,\ldots,k-1\}$, exactly one such root lies in
$(\alpha_i,\alpha_{i+1})$. Together with the eigenvalue $0$
corresponding to $\mathbf{1}$, these eigenvalues constitute the entire
spectrum of $T$.
\end{theorem}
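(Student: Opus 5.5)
We work throughout with the identity $T=P\Delta P$ established above. It gives $T\mathbf{1}=P\Delta P\mathbf{1}=\mathbf{0}$ at once, and it shows that $T$ maps $\mathbf{1}^{\perp}$ into itself. So it suffices to study the compression $\left.T\right|_{\mathbf{1}^{\perp}}$. For $x\in\mathbf{1}^{\perp}$ we have $Px=x$, hence $Tx=P\Delta x=\Delta x-\frac{1}{n}(\delta^{\top}x)\mathbf{1}$. The eigenvalue equation $Tx=\lambda x$ with $x\in\mathbf{1}^{\perp}$ is therefore equivalent to
\[
(\delta_j-\lambda)x_j=c\quad\text{for all } j,\qquad c:=\tfrac{1}{n}\delta^{\top}x,\qquad \mathbf{1}^{\top}x=0 .
\]
This is the rank-one-perturbation (secular equation) setting, and the rest of the proof is a case split on $\lambda$.

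\emph{Case $\lambda=\alpha_i$.} For $j\notin I_i$ the equation gives $x_j=c/(\delta_j-\alpha_i)$. For $j\in I_i$ it gives $0=c$, which is possible only if $c=0$ or $I_i=\emptyset$; the latter is excluded. Hence $c=0$, so $x_j=0$ off $I_i$, and $x\in E_i$. Conversely, if $x\in E_i$, then $\delta^{\top}x=\alpha_i\mathbf{1}^{\top}x=0$ and $\Delta x=\alpha_i x$, so $Tx=\alpha_i x$. Thus $E_i$ is exactly the solution set, and $\dim E_i=m_i-1$, since it is defined by one nontrivial linear condition on $\mathbb{R}^{I_i}$.

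\emph{Case $\lambda\notin\{\alpha_1,\dots,\alpha_k\}$.} Here $x_j=c/(\delta_j-\lambda)$. We must have $c\neq0$, since otherwise $x=0$. The zero-sum condition then becomes $c\,f(\lambda)=0$, that is, $f(\lambda)=0$. Conversely, if $f(\lambda)=0$, set $x_j=(\delta_j-\lambda)^{-1}$. We need to check consistency, namely that $\frac{1}{n}\delta^{\top}x=1$. This follows from
\[
\delta^{\top}x=\sum_j\frac{\delta_j-\lambda+\lambda}{\delta_j-\lambda}=n+\lambda f(\lambda)=n .
\]
So each root gives an eigenvector, unique up to scale, and the associated eigenspace is one-dimensional.

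\emph{Locating the roots and counting.} On each interval $(\alpha_i,\alpha_{i+1})$, $f$ is continuous. It tends to $-\infty$ as $\lambda\to\alpha_i^{+}$ and to $+\infty$ as $\lambda\to\alpha_{i+1}^{-}$, and $f'(\lambda)=\sum_j(\delta_j-\lambda)^{-2}>0$. Hence there is exactly one root in each such interval. Outside $[\alpha_1,\alpha_k]$ there is no root, because all terms $m_i/(\alpha_i-\lambda)$ have the same sign there. This yields exactly $k-1$ roots. For the simplicity claim, we must also rule out a root coinciding with some $\alpha_i$ and rule out extra eigenvectors; both are settled by the case analysis above.

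Finally, the dimension count closes the argument, and this is the step most in need of care. Eigenvectors belonging to distinct eigenvalues of the symmetric operator are orthogonal, so the eigenspaces found have total dimension
\[
\sum_i(m_i-1)+(k-1)=n-1=\dim\mathbf{1}^{\perp}.
\]
Together with $\mathbf{1}$, which has eigenvalue $0$, they therefore exhaust $\mathbb{R}^n$, and the spectrum of $T$ is exactly as stated. Two degenerate situations need a remark. If $k=1$, then $\Delta$ is trace-free, so $\Delta=0$ and every $E_1$-vector is an eigenvector for $0$. If one of the roots or one of the $\alpha_i$ equals $0$, the multiplicity of $0$ in the full spectrum increases, but this does not conflict with the statement, which lists eigenvalues on $\mathbf{1}^{\perp}$ separately.
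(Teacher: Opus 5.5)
Your proposal is correct and follows the paper's proof essentially step for step. Like the paper, you reduce via $T=P\Delta P$ to the system $(\delta_j-\lambda)x_j=c$ with $c=\frac{1}{n}\delta^{\top}x$, identify the level-set eigenspaces $E_i$ and the secular equation $f(\lambda)=0$ (including the same consistency check $\delta^{\top}x=n+\lambda f(\lambda)=n$), locate the roots by monotonicity and the limits at the poles, and close with the count $\sum_i(m_i-1)+(k-1)=n-1$. The only difference is cosmetic: you split cases on whether $\lambda\in\{\alpha_1,\ldots,\alpha_k\}$ rather than on whether $c=0$, and these two case splits are equivalent.
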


\begin{proof}
By the identity $T=P\Delta P$, we have
$T\mathbf{1}=\mathbf{0}$. Since $T$ is symmetric,
$\mathbf{1}^{\perp}$ is invariant under $T$, and
\[
\mathbb{R}^n
=
\operatorname{span}\{\mathbf{1}\}
\oplus
\mathbf{1}^{\perp}
\]
is an orthogonal decomposition into $T$-invariant subspaces.

Let $x\in\mathbf{1}^{\perp}$ satisfy
\[
Tx=\lambda x.
\]
We obtain
\[
Tx
=
P\Delta x
=
\Delta x-\frac{1}{n}\mathbf{1}\delta^{\top}x.
\]
Set
\[
c=\frac{1}{n}\delta^{\top}x.
\]
The eigenvalue equation is then equivalent to
\[
(\delta_j-\lambda)x_j=c,
\qquad
j=1,\ldots,n.
\]

Suppose first that $c=0$. Then
\[
(\delta_j-\lambda)x_j=0
\]
for every $j$. Hence, if $x\neq 0$, then $\lambda=\alpha_i$ for some
$i$, and the support of $x$ is contained in $I_i$. Since
$x\in\mathbf{1}^{\perp}$, we have
$\mathbf{1}^{\top}x=0$, and therefore $x\in E_i$.

Conversely, if $x\in E_i$, then
\[
\delta^{\top}x
=
\alpha_i\mathbf{1}^{\top}x
=
0,
\]
and hence
\[
Tx
=
P\Delta x
=
\Delta x
=
\alpha_i x.
\]
Thus, $E_i$ is precisely the set of solutions in
$\mathbf{1}^{\perp}$ of $Tx=\alpha_i x$. Since the coordinates of a
vector in $E_i$ are supported on $I_i$ and have sum zero,
\[
\dim E_i=m_i-1.
\]

Suppose now that $c\neq 0$. Then
$\lambda\notin\{\alpha_1,\ldots,\alpha_k\}$, and
\[
x_j=\frac{c}{\delta_j-\lambda}.
\]
Since an eigenvector may be rescaled, we may assume that $c=1$.
The condition $x\in\mathbf{1}^{\perp}$ then becomes
\[
\sum_{j=1}^{n}\frac{1}{\delta_j-\lambda}=0.
\]

Conversely, suppose that
$\lambda\notin\{\alpha_1,\ldots,\alpha_k\}$ satisfies this equation,
and define
\[
x_j=\frac{1}{\delta_j-\lambda},
\qquad
j=1,\ldots,n.
\]
Then $\mathbf{1}^{\top}x=0$. Moreover,
\[
\frac{1}{n}\delta^{\top}x
=
\frac{1}{n}
\sum_{j=1}^{n}\frac{\delta_j}{\delta_j-\lambda}
=
\frac{1}{n}
\sum_{j=1}^{n}
\left(
1+\frac{\lambda}{\delta_j-\lambda}
\right)
=
1.
\]
Therefore,
\[
Tx
=
\Delta x-\mathbf{1}
=
\lambda x.
\]
Thus, the eigenvalues arising from the case $c\neq 0$ are precisely the
roots of $f$.

On each interval $(\alpha_i,\alpha_{i+1})$, the function $f$ is
continuously differentiable and satisfies
\[
f'(\lambda)
=
\sum_{j=1}^{n}\frac{1}{(\delta_j-\lambda)^2}
>0.
\]
Furthermore,
\[
\lim_{\lambda\to\alpha_i^+}f(\lambda)=-\infty,
\qquad
\lim_{\lambda\to\alpha_{i+1}^-}f(\lambda)=+\infty.
\]
Hence, $f$ has exactly one root in each interval
$(\alpha_i,\alpha_{i+1})$. Each root is simple because
$f'(\lambda)>0$.

For any such root $\lambda$, the relation
\[
x_j=\frac{c}{\delta_j-\lambda}
\]
shows that the corresponding eigenspace in $\mathbf{1}^{\perp}$ is
one-dimensional. Since $T$ is symmetric, $\lambda$ is a simple
eigenvalue of $\left.T\right|_{\mathbf{1}^{\perp}}$.

There are no roots in $(-\infty,\alpha_1)$, since $f(\lambda)>0$ on
that interval, and there are no roots in $(\alpha_k,\infty)$, since
$f(\lambda)<0$ there. Thus, $f$ has exactly $k-1$ roots.

Finally,
\[
\sum_{i=1}^{k}(m_i-1)+(k-1)=n-1.
\]
Therefore, the eigenvectors described above account for all of
$\mathbf{1}^{\perp}$. Together with the eigenvector $\mathbf{1}$,
they account for the entire spectrum of $T$.
\end{proof}

\begin{corollary}\label{funda subspaces T}
Let
\[
Z=\{i\in\{1,\ldots,n\}:\delta_i=0\},
\qquad
m_0=|Z|,
\]
and let $e_i$ denote the $i$th standard basis vector of
$\mathbb{R}^n$. When $m_0=0$, define
\[
s=\sum_{i=1}^{n}\frac{1}{\delta_i}.
\]

\begin{enumerate}
\item[(i)]
If $m_0>0$, then
\[
\ker T
=
\operatorname{span}\{\mathbf{1}\}
\oplus
\bigl(\ker\Delta\cap\mathbf{1}^{\perp}\bigr),
\]
\[
\operatorname{Im}T
=
\operatorname{span}\{Pe_i:i\notin Z\}
=
\left\{
y\in\mathbf{1}^{\perp}:
y_i=y_j\text{ for all }i,j\in Z
\right\},
\]
and
\[
\operatorname{rank}T=n-m_0.
\]

\item[(ii)]
If $m_0=0$ and $s\neq 0$, then
\[
\ker T=\operatorname{span}\{\mathbf{1}\},
\qquad
\operatorname{Im}T=\mathbf{1}^{\perp},
\qquad
\operatorname{rank}T=n-1.
\]

\item[(iii)]
If $m_0=0$ and $s=0$, then
\[
\ker T
=
\operatorname{span}
\left\{
\mathbf{1},\Delta^{-1}\mathbf{1}
\right\},
\]
\[
\operatorname{Im}T
=
\left\{
y\in\mathbb{R}^n:
\mathbf{1}^{\top}y=0,\ 
\mathbf{1}^{\top}\Delta^{-1}y=0
\right\},
\]
and
\[
\operatorname{rank}T=n-2.
\]
\end{enumerate}
\end{corollary}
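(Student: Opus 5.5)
The plan is to read everything off Theorem~\ref{thm:spectrumT} together with the identity $T=P\Delta P$. Since $T$ is symmetric, $\operatorname{Im}T=(\ker T)^{\perp}$, so it suffices to compute $\ker T$ in each case and then take orthogonal complements. We already know $T\mathbf{1}=\mathbf{0}$, so $\ker T=\operatorname{span}\{\mathbf{1}\}\oplus(\ker T\cap\mathbf{1}^{\perp})$. By Theorem~\ref{thm:spectrumT}, the eigenvalue $0$ of $T|_{\mathbf{1}^{\perp}}$ can arise in exactly two ways. It may equal some $\alpha_i$, giving the space $E_i$. Or it may be a root of $f$ that is not an $\alpha_i$, giving the one-dimensional space spanned by $x_j=1/\delta_j$, which is $\Delta^{-1}\mathbf{1}$.

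For (i), $0=\alpha_i$ for the level set $I_i=Z$, so $0$ is not a root of $f$ off the $\alpha$'s. Hence $\ker T\cap\mathbf{1}^{\perp}=E_i$, which is $\{x:\operatorname{supp}x\subseteq Z,\ \mathbf{1}^\top x=0\}=\ker\Delta\cap\mathbf{1}^{\perp}$. Its dimension is $m_0-1$, so $\operatorname{rank}T=n-m_0$. For the image, the complement of $\ker T$ consists of $y\perp\mathbf{1}$ with $y\perp(e_i-e_j)$ for $i,j\in Z$, since these differences span $E_i$. That is exactly the stated set. To match it with $\operatorname{span}\{Pe_i:i\notin Z\}$, observe that $Te_i=P\Delta Pe_i$. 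Expanding $Pe_i=e_i-\frac1n\mathbf{1}$ and using $\Delta\mathbf{1}=\delta$ with $P\delta=\delta$ (because $\operatorname{tr}\Delta=0$) gives $Te_i=\delta_iPe_i-\frac1n\delta$. Summing, the range of $T$ is spanned by these vectors, and we will check that it lies in $\operatorname{span}\{Pe_i:i\notin Z\}$. This holds because $\delta=\sum_{i\notin Z}\delta_ie_i=\sum_{i\notin Z}\delta_iPe_i$, using $\mathbf{1}^\top\delta=0$. Finally we compare dimensions. The vectors $Pe_i$ for $i\notin Z$ are $n-m_0$ in number. They are independent when $m_0>0$, because a relation $\sum c_iPe_i=0$ forces $\sum c_ie_i\in\operatorname{span}\{\mathbf{1}\}$, and a vector supported off the nonempty set $Z$ can lie there only if it is zero. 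Equality of dimensions then gives equality of the spaces.

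For (ii) and (iii), $m_0=0$, so $0$ is not an $\alpha_i$ and the only possible kernel vector in $\mathbf{1}^{\perp}$ is $\Delta^{-1}\mathbf{1}$. Its presence is decided by whether $f(0)=s$ vanishes. If $s\neq0$, then $\ker T=\operatorname{span}\{\mathbf{1}\}$, $\operatorname{Im}T=\mathbf{1}^{\perp}$, and the rank is $n-1$. If $s=0$, then $\Delta^{-1}\mathbf{1}$ lies in $\mathbf{1}^{\perp}$ and is an eigenvector for $0$, which can also be verified directly from the proof of Theorem~\ref{thm:spectrumT} with $\lambda=0$. The root is simple, so $\ker T=\operatorname{span}\{\mathbf{1},\Delta^{-1}\mathbf{1}\}$. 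Its orthogonal complement is the stated set, and the rank is $n-2$.

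The main obstacle I expect is the description $\operatorname{Im}T=\operatorname{span}\{Pe_i:i\notin Z\}$ in (i). The remaining claims are direct consequences of the theorem plus symmetry, but this one needs the containment argument and the independence count above. A secondary point is the degenerate case $m_0=n$, where $\Delta=0$ and both sides of the image description reduce to $\{0\}$; this is consistent and should be noted separately.
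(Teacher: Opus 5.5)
Your proof is correct and follows the paper's argument. Like the paper, you reduce $Ty=0$ on $\mathbf{1}^\perp$ to $\delta_i y_i=c$ (you invoke Theorem~\ref{thm:spectrumT} at $\lambda=0$, while the paper reruns that computation directly), take orthogonal complements, and in (i) combine independence of the $n-m_0$ vectors $Pe_i$ with a dimension count. The one difference is the direction of containment in (i): the paper simply checks that each $Pe_i$ with $i\notin Z$ lies in $(\ker T)^\perp$, since it is in $\mathbf{1}^\perp$ and has every $Z$-coordinate equal to $-\frac1n$, which is slightly quicker than your route through $Te_i=\delta_iPe_i-\frac1n\delta$.
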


\begin{proof}
Every $x\in\mathbb{R}^n$ can be written uniquely as
\[
x=a\mathbf{1}+y,
\qquad
y\in\mathbf{1}^{\perp}.
\]
Since $T\mathbf{1}=0$, we have $Tx=0$ if and only if $Ty=0$.
As in the proof of Theorem~\ref{thm:spectrumT}, the equation $Ty=0$
is equivalent to
\[
\delta_i y_i=c,
\qquad
i=1,\ldots,n,
\]
where
\[
c=\frac{1}{n}\delta^{\top}y.
\]

Suppose first that $m_0>0$. Choosing any $i\in Z$ in the equation
$\delta_i y_i=c$ gives $c=0$. It follows that $y_i=0$ whenever
$i\notin Z$. Hence
\[
y\in\ker\Delta\cap\mathbf{1}^{\perp}.
\]
Conversely, every vector in
$\ker\Delta\cap\mathbf{1}^{\perp}$ belongs to $\ker T$. Therefore,
\[
\ker T
=
\operatorname{span}\{\mathbf{1}\}
\oplus
\bigl(\ker\Delta\cap\mathbf{1}^{\perp}\bigr).
\]
Since
\[
\dim\bigl(\ker\Delta\cap\mathbf{1}^{\perp}\bigr)=m_0-1,
\]
we obtain
\[
\dim\ker T=m_0
\]
and therefore
\[
\operatorname{rank}T=n-m_0.
\]

Suppose next that $m_0=0$. Then $\Delta$ is invertible, and the
equations $\delta_i y_i=c$ give
\[
y=c\Delta^{-1}\mathbf{1}.
\]
Since $y\in\mathbf{1}^{\perp}$,
\[
0
=
\mathbf{1}^{\top}y
=
c\mathbf{1}^{\top}\Delta^{-1}\mathbf{1}
=
c\sum_{i=1}^{n}\frac{1}{\delta_i}
=
cs.
\]
If $s\neq 0$, then $c=0$ and hence $y=0$. Thus
\[
\ker T=\operatorname{span}\{\mathbf{1}\},
\]
which gives
\[
\operatorname{rank}T=n-1.
\]

If $s=0$, then
\[
\Delta^{-1}\mathbf{1}\in\mathbf{1}^{\perp},
\]
and every scalar multiple of $\Delta^{-1}\mathbf{1}$ belongs to
$\ker T$. Therefore,
\[
\ker T
=
\operatorname{span}
\left\{
\mathbf{1},\Delta^{-1}\mathbf{1}
\right\}.
\]
The two spanning vectors are linearly independent, so
\[
\dim\ker T=2
\]
and
\[
\operatorname{rank}T=n-2.
\]

Finally, since $T$ is symmetric,
\[
\operatorname{Im}T=(\ker T)^{\perp}.
\]
The image formulas in parts \textup{(ii)} and \textup{(iii)} follow
immediately by taking orthogonal complements.

For part \textup{(i)}, a vector orthogonal to
$\ker\Delta\cap\mathbf{1}^{\perp}$ must have equal coordinates on
$Z$. Hence,
\[
(\ker T)^{\perp}
=
\left\{
y\in\mathbf{1}^{\perp}:
y_i=y_j\text{ for all }i,j\in Z
\right\}.
\]
Each vector $Pe_i$ with $i\notin Z$ belongs to this space. These
vectors are linearly independent because $Z\neq\varnothing$, and
their number is $n-m_0$, which equals $\operatorname{rank}T$.
Therefore,
\[
\operatorname{Im}T
=
\operatorname{span}\{Pe_i:i\notin Z\}.
\]
This completes the proof.
\end{proof}

Recall that the inertia of a real symmetric matrix $A$ is the triple
\[
\operatorname{In}(A)=(e_+,e_-,e_0),
\]
where $e_+$ and $e_-$ are the numbers of positive and negative
eigenvalues of $A$, respectively, and $e_0$ is the multiplicity of
the eigenvalue $0$.
The following result states that the inertia of $T$ is completely determined by that of $\Delta$ and the actual resistance deviations. 

\begin{lemma}\label{sign lemma}
Assume that $0$ does not occur as a resistance deviation. Let $\lambda_*$
denote the unique eigenvalue of $\left.T\right|_{\mathbf{1}^{\perp}}$
lying between the largest negative and the smallest positive resistance
deviations. Then
\[
\operatorname{sgn}(\lambda_*)
=
-\operatorname{sgn}\bigl(f(0)\bigr)
=
-\operatorname{sgn}\left(\sum_{i=1}^{n}\frac{1}{\delta_i}\right).
\]
\end{lemma}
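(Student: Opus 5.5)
Since $0$ is not a resistance deviation and $\Delta$ is trace-free and nonzero (if all $\delta_i$ were equal they would all be $0$), both negative and positive deviations occur. Let $\alpha_p$ be the largest negative and $\alpha_{p+1}$ the smallest positive deviation; these are consecutive among the distinct values $\alpha_1<\cdots<\alpha_k$. By Theorem~\ref{thm:spectrumT}, $f$ has exactly one root in $(\alpha_p,\alpha_{p+1})$, and this root is $\lambda_*$. Note that $0\in(\alpha_p,\alpha_{p+1})$, and no $\delta_j$ vanishes, so $f(0)=\sum_i 1/\delta_i$ is well defined. This gives the second equality in the statement at once.

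For the first equality I would use the monotonicity established in the proof of Theorem~\ref{thm:spectrumT}. On $(\alpha_p,\alpha_{p+1})$, $f$ is continuous and strictly increasing, since $f'(\lambda)=\sum_j(\delta_j-\lambda)^{-2}>0$. It tends to $-\infty$ at the left endpoint and to $+\infty$ at the right, and vanishes only at $\lambda_*$. Hence $f(\lambda)<0$ for $\alpha_p<\lambda<\lambda_*$ and $f(\lambda)>0$ for $\lambda_*<\lambda<\alpha_{p+1}$. I would then compare $0$ with $\lambda_*$ in three cases:
\begin{enumerate}
\item[(a)] if $\lambda_*>0$, then $0\in(\alpha_p,\lambda_*)$, so $f(0)<0$;
\item[(b)] if $\lambda_*<0$, then $f(0)>0$;
\item[(c)] if $\lambda_*=0$, then $f(0)=0$.
\end{enumerate}
In every case $\operatorname{sgn}(\lambda_*)=-\operatorname{sgn}(f(0))$.

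There is no real obstacle here. The only point needing care is that $\lambda_*$ is well defined, which requires both a negative and a positive deviation. This follows from $\operatorname{tr}\Delta=0$ together with $0\notin\{\delta_i\}$, using $n\ge 1$ so that $\Delta\neq 0$ in this case.
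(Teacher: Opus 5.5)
Your proof is correct and matches the paper's argument: both place $0$ and $\lambda_*$ in the same interval $(\alpha_p,\alpha_{p+1})$ and read off the sign of $\lambda_*$ from the strict monotonicity of $f$ there. Your extra check that both positive and negative deviations occur, using $\operatorname{tr}\Delta=0$ and $0\notin\{\delta_i\}$, makes explicit why $\lambda_*$ is well defined, a point the paper leaves implicit.
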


\begin{proof}
Let
\[
\alpha_1<\alpha_2<\cdots<\alpha_k
\]
be the distinct resistance deviations, and choose $r$ such that
$\alpha_r<0<\alpha_{r+1}$. By Theorem~\ref{thm:spectrumT},
$\lambda_*$ is the unique root of $f$ in
$(\alpha_r,\alpha_{r+1})$, and $f$ is strictly increasing on this
interval. Since $0$ lies in the same interval, $\lambda_*>0$ when
$f(0)<0$, $\lambda_*<0$ when $f(0)>0$, and $\lambda_*=0$ when
$f(0)=0$. Finally,
\[
f(0)=\sum_{i=1}^{n}\frac{1}{\delta_i},
\]
which proves the result.
\end{proof}

\begin{theorem}\label{inertia of T}
Let $n_+$ denote the number of positive deviations and $n_-$ the number
of negative deviations. When $0$ does not occur as a resistance
deviation, there are the following three possibilities for the inertia
of $T$:
\begin{enumerate}
\item If $f(0)<0$, then
$\operatorname{In}(T)=(n_+,n_--1,1)$.
\item If $f(0)>0$, then
$\operatorname{In}(T)=(n_+-1,n_-,1)$.
\item If $f(0)=0$, then
$\operatorname{In}(T)=(n_+-1,n_--1,2)$.
\end{enumerate}
If $0$ occurs as a resistance deviation with multiplicity $n_0$, then
\[
\operatorname{In}(T)=(n_+,n_-,n_0).
\]
\end{theorem}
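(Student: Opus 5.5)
We read off the spectrum of $T$ from Theorem~\ref{thm:spectrumT} and count signs. The spectrum consists of the eigenvalue $0$ on $\mathbf{1}$, the eigenvalue $\alpha_i$ with multiplicity $m_i-1$ on $E_i$, and one simple root of $f$ in each gap $(\alpha_i,\alpha_{i+1})$. Let $\alpha_1<\cdots<\alpha_r<0<\alpha_{r+1}<\cdots<\alpha_k$ in the case where $0$ is not a deviation. Since $\Delta$ is trace-free and nonzero here, both $r\ge 1$ and $k-r\ge 1$.

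The level-set eigenvalues contribute $\sum_{i\le r}(m_i-1)=n_--r$ negative and $\sum_{i>r}(m_i-1)=n_+-(k-r)$ positive eigenvalues. For the roots of $f$:
\begin{itemize}
\item the roots in $(\alpha_i,\alpha_{i+1})$ with $i<r$ lie below $\alpha_r<0$, giving $r-1$ negative eigenvalues;
\item the roots with $i>r$ lie above $\alpha_{r+1}>0$, giving $k-r-1$ positive eigenvalues;
\item the remaining root is $\lambda_*\in(\alpha_r,\alpha_{r+1})$, whose sign is given by Lemma~\ref{sign lemma}.
\end{itemize}
Without $\lambda_*$ the totals are $n_--1$ negative eigenvalues, $n_+-1$ positive eigenvalues, and one zero eigenvalue from $\mathbf{1}$. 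Adding $\lambda_*$ gives the three cases:
\begin{itemize}
\item if $f(0)<0$, then $\lambda_*>0$ and $\operatorname{In}(T)=(n_+,n_--1,1)$;
\item if $f(0)>0$, then $\lambda_*<0$ and $\operatorname{In}(T)=(n_+-1,n_-,1)$;
\item if $f(0)=0$, then $\lambda_*=0$ and $\operatorname{In}(T)=(n_+-1,n_--1,2)$.
\end{itemize}
The last case is consistent with Corollary~\ref{funda subspaces T}(iii).

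Now suppose $0=\alpha_p$ occurs as a deviation with multiplicity $n_0$. Then $E_p$ contributes $n_0-1$ zero eigenvalues, and $\mathbf{1}$ contributes one more, for $n_0$ in total. This agrees with $\operatorname{rank}T=n-n_0$ from Corollary~\ref{funda subspaces T}(i), so no root of $f$ vanishes. Every interval $(\alpha_i,\alpha_{i+1})$ lies entirely on one side of $0$: those with $i<p$ lie in $(-\infty,0)$ and those with $i\ge p$ lie in $(0,\infty)$.

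Counting the negative eigenvalues, the level sets contribute $\sum_{i<p}(m_i-1)=n_--(p-1)$ and the roots contribute $p-1$, for a total of $n_-$. The positive eigenvalues total $n_+$ by the symmetric count.

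The degenerate situation needs a small check. If $n_0=n$, then $\Delta=0$ and everything is zero, which is consistent. Otherwise trace-freeness forces both $n_+$ and $n_-$ to be positive whenever either is, so the counts above make sense.

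The main care point is bookkeeping: each gap between consecutive deviations must be assigned to exactly one sign, and only the gap straddling $0$ is ambiguous. That gap is handled entirely by Lemma~\ref{sign lemma}. All other steps are direct counting from Theorem~\ref{thm:spectrumT}.
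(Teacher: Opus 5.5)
Your proposal is correct and follows the paper's proof: read off the spectrum from Theorem~\ref{thm:spectrumT}, count level-set eigenvalues and gap roots on each side of $0$, and settle the single gap straddling $0$ with Lemma~\ref{sign lemma} (in the case where $0$ is not a deviation). Your explicit tallies, such as $(n_--r)+(r-1)=n_--1$, and your remark that trace-freeness guarantees a straddling gap exists, spell out bookkeeping that the paper leaves implicit.
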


\begin{proof}
We use the notation of Theorem~\ref{thm:spectrumT}. Each deviation
$\alpha_i$ of multiplicity $m_i$ contributes an eigenspace of dimension
$m_i-1$, and each interval $(\alpha_i,\alpha_{i+1})$ contains exactly
one simple eigenvalue of $\left.T\right|_{\mathbf{1}^{\perp}}$.

Suppose first that $0$ is not a resistance deviation. The level-set
eigenvalues and the roots in the intervals contained in
$(-\infty,0)$ account for $n_--1$ negative eigenvalues. Similarly,
the level-set eigenvalues and the roots in the intervals contained in
$(0,\infty)$ account for $n_+-1$ positive eigenvalues. The remaining
root is $\lambda_*$. Lemma~\ref{sign lemma} determines its sign, and
the three stated inertia formulas follow after including the eigenvalue
$0$ corresponding to $\mathbf{1}$.

Now suppose that $0$ is a resistance deviation with multiplicity $n_0$.
If $\Delta=\mathbf{0}$, then $T=\mathbf{0}$ and the result is immediate.
Otherwise, since $\Delta$ is trace-free, there are both positive and
negative deviations. Write the distinct resistance deviations as
\[
\alpha_1<\cdots<\alpha_{r-1}<0=\alpha_r
<\alpha_{r+1}<\cdots<\alpha_k.
\]
The zero deviation contributes an eigenspace of dimension $n_0-1$;
together with the eigenvalue corresponding to $\mathbf{1}$, this gives
nullity $n_0$. The negative level-set eigenvalues and the roots in the
intervals to the left of $0$ account for $n_-$ negative eigenvalues.
Likewise, the positive level-set eigenvalues and the roots in the
intervals to the right of $0$ account for $n_+$ positive eigenvalues.
Hence,
\[
\operatorname{In}(T)=(n_+,n_-,n_0).
\]
\end{proof}

\begin{corollary}\label{indefiniteness}
The resistance deviation operator $T$ is indefinite unless
$\Delta=\mathbf{0}$. Equivalently, $T$ is semidefinite if and only if
$\Delta=\mathbf{0}$.
\end{corollary}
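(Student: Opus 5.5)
The plan is to read the corollary off the inertia formulas in Theorem~\ref{inertia of T}. One direction is trivial: if $\Delta=\mathbf{0}$, then $T=P\Delta P=\mathbf{0}$, which is both positive and negative semidefinite. For the converse, assume $\Delta\neq\mathbf{0}$ and show that $T$ has at least one positive and at least one negative eigenvalue, i.e.\ $e_+\ge 1$ and $e_-\ge 1$ in $\operatorname{In}(T)$.

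The key observation is that $\Delta$ is trace-free, so $\sum_i\delta_i=0$. Hence if some $\delta_i\neq 0$, there must be both a positive and a negative deviation, so $n_+\ge1$ and $n_-\ge1$. Next split into the cases of Theorem~\ref{inertia of T}.

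\emph{Zero occurs as a deviation.} Then $\operatorname{In}(T)=(n_+,n_-,n_0)$, and both $n_+$ and $n_-$ are at least $1$. This case is therefore immediate.

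\emph{Zero does not occur.} Here the formulas lose up to one from each of $n_+$ and $n_-$, so one needs $n_+\ge2$ or $n_-\ge 2$ in the relevant subcase. This is the main obstacle, and the case $n_+=n_-=1$ is the dangerous one. Then $n=2$ with $\delta=(a,-a)$. But every connected graph on two vertices is resistance-transmission regular, as noted earlier, so $\Delta=\mathbf{0}$, a contradiction. More generally, for $n\ge3$ with no zero deviation we have $n_++n_-=n\ge3$, so at least one of $n_\pm$ is at least $2$. That alone is not enough, so I would argue subcase by subcase:
\begin{itemize}
\item If $f(0)<0$, the inertia is $(n_+,n_--1,1)$. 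It suffices to have $n_-\ge2$.
\item If $f(0)>0$, the inertia is $(n_+-1,n_-,1)$. It suffices to have $n_+\ge2$.
\item If $f(0)=0$, both $n_\pm\ge2$ are needed.
\end{itemize}
To obtain these, I would use the sign of $f(0)=\sum_i 1/\delta_i$ together with $\sum_i\delta_i=0$. For example, suppose $n_-=1$, with single negative deviation $-b$ and positive deviations $a_j$ satisfying $\sum_j a_j=b$. Then each $a_j\le b$, so $1/a_j\ge 1/b$, and
\[
f(0)=\sum_j \frac{1}{a_j}-\frac1b\ \ge\ \frac{n_+-1}{b}\ \ge 0 .
\]
Equality holds only when $n_+=1$, which is excluded for $n\ge3$. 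Hence $n_-=1$ forces $f(0)>0$, and symmetrically $n_+=1$ forces $f(0)<0$. This yields exactly the needed bounds in each subcase: $f(0)<0$ forces $n_-\ge2$, $f(0)>0$ forces $n_+\ge2$, and $f(0)=0$ forces both.

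It follows that $e_+\ge1$ and $e_-\ge1$ whenever $\Delta\neq\mathbf{0}$, so $T$ is indefinite. The equivalence with semidefiniteness is then just a restatement of the two directions above.
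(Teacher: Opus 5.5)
Your proof is correct, but in the case where $0$ is not a resistance deviation you take a different route from the paper. The paper does no subcase analysis there. It observes that $\operatorname{tr}(T)=\operatorname{tr}(P\Delta P)=\operatorname{tr}(\Delta P)=0$, and that $T\neq\mathbf{0}$ by the earlier equivalence $T=\mathbf{0}\iff\Delta=\mathbf{0}$. Since a nonzero semidefinite symmetric matrix has nonzero trace, $T$ must have eigenvalues of both signs. That argument is shorter, and for this case it does not actually need the inertia formulas at all.

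You instead combine the inertia formulas with the elementary estimate $f(0)\ge (n_+-1)/b$ when $n_-=1$, and its mirror image when $n_+=1$. This estimate is valid because $\sum_i\delta_i=0$ forces each positive deviation to be at most $b$. Your route buys information the paper does not state. When $n\ge 3$, no deviation is zero, and exactly one deviation is negative, one always has $f(0)>0$. Hence $\lambda_*<0$ and $\operatorname{In}(T)=(n_+-1,1,1)$. Symmetrically, $\operatorname{In}(T)=(1,n_--1,1)$ when exactly one deviation is positive. So your argument sharpens Theorem~\ref{inertia of T} by ruling out some of its three alternatives in these situations.

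Both arguments need the fact that a connected graph of order two is resistance-transmission regular. You invoke it directly to dispose of $n_+=n_-=1$; the paper uses it implicitly through $T\neq\mathbf{0}$.
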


\begin{proof}
If $\Delta=\mathbf{0}$, then $T=P\Delta P=\mathbf{0}$, so $T$ is
semidefinite. Conversely, suppose that $\Delta\neq\mathbf{0}$. Since
$\Delta$ is trace-free, it has both positive and negative diagonal
entries. If $0$ occurs as a resistance deviation, the preceding theorem
gives
\[
\operatorname{In}(T)=(n_+,n_-,n_0),
\]
where $n_+>0$ and $n_->0$; hence $T$ is indefinite.

Now suppose that $0$ is not a resistance deviation. The preceding
theorem gives one of the three stated inertia formulas. Moreover,
\[
\operatorname{tr}(T)
=
\operatorname{tr}(P\Delta P)
=
\operatorname{tr}(\Delta P)
=0,
\]
and the equivalence established above gives $T\neq\mathbf{0}$. Therefore,
the positive and negative indices of $T$ must both be nonzero; otherwise,
$T$ would be a nonzero semidefinite matrix with trace zero. Thus, $T$ is
indefinite.
\end{proof}

The quadratic form associated with the deviation operator admits a particularly simple description.

\begin{proposition}
For every $x\in\mathbb{R}^n$,
\[
x^{\top}Tx
=
\sum_{i=1}^{n}\delta_i x_i^2
-
\frac{2}{n}
\left(\sum_{i=1}^{n}\delta_i x_i\right)
\left(\sum_{i=1}^{n}x_i\right).
\]
In particular, if $x\perp\mathbf{1}$, then
\[
x^{\top}Tx
=
\sum_{i=1}^{n}\delta_i x_i^2.
\]
\end{proposition}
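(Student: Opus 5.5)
The plan is to compute the quadratic form directly from the defining expression $T=\Delta P+P\Delta-\Delta$. The alternative representation $T=P\Delta P$ established above would also work, but the first form separates the terms most cleanly. Since $P=I-\frac1n J$, we have
\[
T=\Delta-\frac1n\Delta J-\frac1n J\Delta,
\]
which is just the intermediate line in the derivation of $T=P\Delta P$. I will evaluate $x^{\top}Tx$ term by term.

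First, $x^{\top}\Delta x=\sum_i\delta_i x_i^2$, since $\Delta$ is diagonal with entries $\delta_i$. Next, writing $J=\mathbf{1}\mathbf{1}^{\top}$, the second term is
\[
x^{\top}\Delta J x=(x^{\top}\Delta\mathbf{1})(\mathbf{1}^{\top}x)=\Bigl(\sum_i\delta_i x_i\Bigr)\Bigl(\sum_i x_i\Bigr).
\]
The third term satisfies $x^{\top}J\Delta x=(x^{\top}\mathbf{1})(\mathbf{1}^{\top}\Delta x)$, which is the same scalar because $\Delta$ is symmetric and $\Delta\mathbf{1}=\delta$. Adding the three contributions with the coefficients $1,-\frac1n,-\frac1n$ gives exactly the displayed identity.

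For the particular case, suppose $x\perp\mathbf{1}$, so that $\sum_i x_i=0$. Then the cross term vanishes and $x^{\top}Tx=\sum_i\delta_i x_i^2$. As a consistency check, this agrees with $T=P\Delta P$: for $x\perp\mathbf{1}$ we have $Px=x$, so $x^{\top}Tx=x^{\top}\Delta x$.

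There is no real obstacle here. The only point needing care is recognizing that the two rank-one cross terms are equal, which uses the symmetry of $\Delta$ (equivalently $\mathbf{1}^{\top}\Delta x=\delta^{\top}x$). This is what produces the factor $2/n$. Note that the trace-free property of $\Delta$ is not needed for this identity; it entered only earlier, in proving $T=P\Delta P$.
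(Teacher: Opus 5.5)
Your proposal is correct and follows essentially the same route as the paper, which writes $T=\Delta-\frac1n(\delta\mathbf{1}^{\top}+\mathbf{1}\delta^{\top})$ (identical to your $\Delta-\frac1n\Delta J-\frac1n J\Delta$, since $\Delta J=\delta\mathbf{1}^{\top}$ and $J\Delta=\mathbf{1}\delta^{\top}$) and then evaluates the two rank-one cross terms together as $2(\delta^{\top}x)(\mathbf{1}^{\top}x)$, just as you do. Your remark is also accurate: starting from the definition $T=\Delta P+P\Delta-\Delta$, the trace-free property of $\Delta$ is not needed for this identity.
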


\begin{proof}
Using
\[
T=\Delta-\frac1n(\delta\mathbf{1}^{\top}+\mathbf{1}\delta^{\top}),
\]
we compute
\[
x^{\top}Tx
=
x^{\top}\Delta x
-
\frac1n
x^{\top}
(\delta\mathbf{1}^{\top}+\mathbf{1}\delta^{\top})
x.
\]
Since
\[
x^{\top}\Delta x=\sum_{i=1}^{n}\delta_i x_i^2,
\]
and
\[
x^{\top}
(\delta\mathbf{1}^{\top}+\mathbf{1}\delta^{\top})
x
=
2(\delta^{\top}x)(\mathbf{1}^{\top}x),
\]
the desired identity follows immediately.
The second statement follows from the fact that
$\mathbf{1}^{\top}x=0$.
\end{proof}

\subsection{The Resistance Laplacian in Laplacian Coordinates}

Since
\[
L^\dagger\mathbf{1}=\mathbf{0}
\qquad\text{and}\qquad
\Rs\mathbf{1}=\mathbf{0},
\]
the subspace $\mathbf{1}^{\perp}$ is invariant under both
$L^\dagger$ and $\Rs$. Hence, the nonzero eigenvalues of $\Rs$ and
their corresponding eigenvectors can be studied by restricting $\Rs$
to $\mathbf{1}^{\perp}$. Combining the canonical decomposition from
Theorem~\ref{canonical decomposition} with the identity
$T=P\Delta P$ established above gives the following result.

\begin{corollary}\label{thm:restriction}
The restriction of the resistance Laplacian to
$\mathbf{1}^{\perp}$ is
\[
\left.\Rs\right|_{\mathbf{1}^{\perp}}
=
2n\bar d\,I_{\mathbf{1}^{\perp}}
+
2\left.L^\dagger\right|_{\mathbf{1}^{\perp}}
+
n\left.P\Delta P\right|_{\mathbf{1}^{\perp}},
\]
where $I_{\mathbf{1}^{\perp}}$ denotes the identity operator on
$\mathbf{1}^{\perp}$.
\end{corollary}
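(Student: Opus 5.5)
The plan is to restrict the canonical decomposition of Theorem~\ref{canonical decomposition},
\[
\Rs=2L^\dagger+2n\bar d\,P+nT,
\]
term by term to the subspace $\mathbf{1}^{\perp}$. First I will check that each summand leaves $\mathbf{1}^{\perp}$ invariant, so that the restrictions are well-defined operators on $\mathbf{1}^{\perp}$. For $L^\dagger$ this follows because it is symmetric and $L^\dagger\mathbf{1}=\mathbf{0}$. For $P$ it holds because $P$ is the orthogonal projection onto $\mathbf{1}^{\perp}$. For $T$ it holds because $T=P\Delta P$ has range contained in $\operatorname{Im}P=\mathbf{1}^{\perp}$. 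Restriction to an invariant subspace is linear in the operator, so
\[
\left.\Rs\right|_{\mathbf{1}^{\perp}}=2\left.L^\dagger\right|_{\mathbf{1}^{\perp}}+2n\bar d\,\left.P\right|_{\mathbf{1}^{\perp}}+n\left.T\right|_{\mathbf{1}^{\perp}}.
\]

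Next I will identify the individual pieces. Since $P$ is an orthogonal projection onto $\mathbf{1}^{\perp}$, we have $Px=x$ for every $x\in\mathbf{1}^{\perp}$, and hence $\left.P\right|_{\mathbf{1}^{\perp}}=I_{\mathbf{1}^{\perp}}$. This gives the isotropic term $2n\bar d\,I_{\mathbf{1}^{\perp}}$. Then I will substitute the identity $T=P\Delta P$, established before Theorem~\ref{canonical decomposition}, to write the last term as $n\left.P\Delta P\right|_{\mathbf{1}^{\perp}}$. Reordering the summands yields exactly the stated formula.

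I do not expect any genuine obstacle here. The only point needing care is the invariance check that justifies splitting the restriction term by term, and that check is immediate from $L^\dagger\mathbf{1}=\mathbf{0}$, $\Rs\mathbf{1}=\mathbf{0}$, and the symmetry of all the operators involved. Optionally, I could remark that on $\mathbf{1}^{\perp}$ the operator $P\Delta P$ acts as $x\mapsto P\Delta x$, i.e., as the compression of $\Delta$ to $\mathbf{1}^{\perp}$. Its quadratic form there is $\sum_i\delta_i x_i^2$ by the preceding proposition.
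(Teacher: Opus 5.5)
Your proposal is correct and follows essentially the same route as the paper: restrict the canonical decomposition $\Rs=2L^\dagger+2n\bar d\,P+nT$ to $\mathbf{1}^{\perp}$, then use $P|_{\mathbf{1}^{\perp}}=I_{\mathbf{1}^{\perp}}$ and $T=P\Delta P$. Your explicit check that each summand leaves $\mathbf{1}^{\perp}$ invariant is a small addition that the paper leaves implicit.
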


\begin{proof}
Restrict the canonical decomposition
\[
\Rs=2L^\dagger+2n\bar d\,P+nT
\]
to $\mathbf{1}^{\perp}$. Since
$\left.P\right|_{\mathbf{1}^{\perp}}=I_{\mathbf{1}^{\perp}}$ and
$T=P\Delta P$, the stated formula follows.
\end{proof}

The restriction separates the resistance Laplacian into an intrinsic component $2L^\dagger$, a scalar shift $2n\bar d I$, and a deviation component $nP\Delta P$. 
Since the scalar term affects only the eigenvalues, the eigenspaces are determined entirely by $2L^\dagger +nP\Delta P$. 
    
To better understand the interaction between the Laplacian geometry and the resistance
deviations, it is natural to express the restricted resistance Laplacian in the eigenbasis of
$L$. Let
\[
0=\lambda_1<\lambda_2\le\cdots\le\lambda_n
\]
be the Laplacian eigenvalues with corresponding orthonormal eigenvectors
$u_1,u_2,\ldots,u_n$, where
\[
u_1=\frac1{\sqrt n}\mathbf1.
\]
Since the restriction is taken to $\mathbf1^\perp$, the vectors
$u_2,\ldots,u_n$
form an orthonormal basis of this subspace.

Define
\[
U=\begin{bmatrix}
u_2&u_3&\cdots&u_n
\end{bmatrix},
\]
and introduce the matrices
\[
D_L=\operatorname{diag}\!\left(
\frac{1}{\lambda_2},
\ldots,
\frac{1}{\lambda_n}
\right),
\qquad
M_L=U^{\mathsf T}\Delta U.
\]

The diagonal matrix $D_L$ is the matrix representation of
$\left.L^\dagger\right|_{\mathbf{1}^{\perp}}$ in the Laplacian
eigenbasis, while $M_L$ is the matrix representation of
$\left.T\right|_{\mathbf{1}^{\perp}}$ in the same basis. Indeed, since
$PU=U$ and $U^{\mathsf T}P=U^{\mathsf T}$,
\[
U^{\mathsf T}TU
=
U^{\mathsf T}P\Delta PU
=
U^{\mathsf T}\Delta U
=
M_L.
\]

The next result gives the complete matrix representation of the resistance Laplacian on $\mathbf1^\perp$.

\begin{theorem}\label{thm:lapbasis}
With respect to the Laplacian eigenbasis
$\{u_2,\ldots,u_n\}$, the restriction of the resistance Laplacian is
represented by
\[
U^{\mathsf T}\Rs U
=
2n\bar d\,I_{n-1}+2D_L+nM_L.
\]
\end{theorem}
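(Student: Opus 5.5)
The plan is to conjugate the canonical decomposition of Theorem~\ref{canonical decomposition} by $U$ and evaluate each of the three terms separately. We write
\[
U^{\mathsf T}\Rs U
=
2U^{\mathsf T}L^\dagger U
+
2n\bar d\,U^{\mathsf T}PU
+
nU^{\mathsf T}TU
\]
and identify each summand in turn.

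\textbf{Step 1: the isotropic term.} The columns $u_2,\ldots,u_n$ of $U$ are orthonormal and lie in $\mathbf{1}^{\perp}$. Hence $PU=U$ and $U^{\mathsf T}U=I_{n-1}$, so
\[
U^{\mathsf T}PU=U^{\mathsf T}U=I_{n-1}.
\]
This accounts for the term $2n\bar d\,I_{n-1}$.

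\textbf{Step 2: the intrinsic term.} We use the spectral decomposition of the Moore--Penrose inverse,
\[
L^\dagger=\sum_{j=2}^{n}\lambda_j^{-1}u_ju_j^{\mathsf T}.
\]
This formula is justified as follows. The right-hand side is symmetric and annihilates $\mathbf{1}$. Multiplying it by $L=\sum_{j\ge2}\lambda_j u_ju_j^{\mathsf T}$ gives $\sum_{j\ge2}u_ju_j^{\mathsf T}=I-\tfrac1nJ$. By the uniqueness stated in the definition of $L^\dagger$, the right-hand side therefore equals $L^\dagger$. Consequently $L^\dagger u_j=\lambda_j^{-1}u_j$ for $j\ge 2$, and orthonormality gives
\[
(U^{\mathsf T}L^\dagger U)_{ab}=\lambda_{b}^{-1}\,u_a^{\mathsf T}u_b,
\]
where $a$ and $b$ range over $2,\ldots,n$. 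This is exactly the $(a,b)$ entry of $D_L$.

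\textbf{Step 3: the deviation term.} This was already computed before the statement. Using $T=P\Delta P$ together with $PU=U$ and $U^{\mathsf T}P=U^{\mathsf T}$, we get $U^{\mathsf T}TU=U^{\mathsf T}\Delta U=M_L$.

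Summing the three contributions yields $U^{\mathsf T}\Rs U=2n\bar d\,I_{n-1}+2D_L+nM_L$. Since $\{u_2,\ldots,u_n\}$ is an orthonormal basis of $\mathbf{1}^{\perp}$, this matrix represents $\left.\Rs\right|_{\mathbf{1}^{\perp}}$, consistent with Corollary~\ref{thm:restriction}.

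The only point needing any care is Step~2, namely confirming that $L^\dagger$ acts as $\lambda_j^{-1}$ on each $u_j$ with $j\ge 2$ using only the defining properties stated in the paper. Steps~1 and~3 are immediate from $PU=U$ and orthonormality.
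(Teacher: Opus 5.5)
Your proposal is correct and follows essentially the same route as the paper. You conjugate the decomposition by $U$ and use $U^{\mathsf T}U=I_{n-1}$, $U^{\mathsf T}L^\dagger U=D_L$ and $U^{\mathsf T}P\Delta PU=M_L$. The differences are cosmetic: you start from the canonical decomposition rather than from its restriction in Corollary~\ref{thm:restriction}, and you explicitly justify $L^\dagger u_j=\lambda_j^{-1}u_j$ through the uniqueness of the Moore--Penrose inverse, which the paper simply asserts as $L^\dagger U=UD_L$.
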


\begin{proof}
Since
\[
L^\dagger U=UD_L,
\]
we have
\[
U^{\mathsf T}L^\dagger U=D_L.
\]
Moreover, as established above,
\[
U^{\mathsf T}(P\Delta P)U=M_L.
\]
Applying $U^{\mathsf T}(\,\cdot\,)U$ to the decomposition in
Corollary~\ref{thm:restriction} gives
\[
U^{\mathsf T}\Rs U
=
2n\bar d\,U^{\mathsf T}U
+
2U^{\mathsf T}L^\dagger U
+
nU^{\mathsf T}(P\Delta P)U.
\]
Since $U^{\mathsf T}U=I_{n-1}$, the required expression follows.
\end{proof}
The preceding theorem separates the resistance Laplacian into two
fundamentally different components. The matrix $D_L$ is diagonal and
depends only on the spectrum of the ordinary graph Laplacian.
Consequently, it acts as a scalar on each Laplacian eigenspace. In
contrast, $M_L$ records how the resistance deviations interact with
the Laplacian eigenvectors and is generally non-diagonal. Thus, all
mixing between distinct Laplacian eigenspaces is encoded by the
cross-eigenspace entries of $M_L$.

This decomposition naturally raises the following question: to what
extent do the Laplacian eigenspaces remain invariant under the
resistance Laplacian? This question is governed entirely by whether
$M_L$ has nonzero entries between distinct Laplacian eigenspaces.

\begin{proposition}\label{prop:commutator}
Let
\[
C=[D_L,M_L]=D_LM_L-M_LD_L.
\]
Indexing the rows and columns of $C$ by the Laplacian eigenvectors
$u_2,\ldots,u_n$, we have
\[
C_{ij}
=
\left(
\frac{1}{\lambda_i}
-
\frac{1}{\lambda_j}
\right)
u_i^\top\Delta u_j,
\qquad
2\leq i,j\leq n.
\]
Consequently, $D_L$ and $M_L$ commute if and only if
\[
u_i^\top\Delta u_j=0
\]
whenever $\lambda_i\neq\lambda_j$.
\end{proposition}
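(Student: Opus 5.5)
The plan is a direct entrywise computation. Index rows and columns of all $(n-1)\times(n-1)$ matrices by $2,\ldots,n$, so that $D_L$ has diagonal entries $(D_L)_{ii}=1/\lambda_i$, and by definition $(M_L)_{ij}=u_i^{\top}\Delta u_j$ since $M_L=U^{\mathsf T}\Delta U$.

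Multiplying by a diagonal matrix on the left scales rows, and on the right scales columns. Hence
\[
(D_LM_L)_{ij}=\frac{1}{\lambda_i}\,u_i^{\top}\Delta u_j
\qquad\text{and}\qquad
(M_LD_L)_{ij}=u_i^{\top}\Delta u_j\,\frac{1}{\lambda_j}.
\]
Subtracting gives the stated formula for $C_{ij}$. Note that $\lambda_i>0$ for $i\geq 2$ because $G$ is connected, so these reciprocals are well defined.

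For the equivalence, observe that $D_L$ and $M_L$ commute iff $C=\mathbf{0}$, that is, iff every entry $C_{ij}$ vanishes. Since $t\mapsto 1/t$ is injective on $(0,\infty)$, the factor $1/\lambda_i-1/\lambda_j$ is zero exactly when $\lambda_i=\lambda_j$. Pairs with $\lambda_i=\lambda_j$, including the diagonal $i=j$, therefore impose no condition. For pairs with $\lambda_i\neq\lambda_j$, the entry $C_{ij}$ vanishes iff $u_i^{\top}\Delta u_j=0$. This yields the claimed criterion.

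No step is a genuine obstacle. The only points needing care are the positivity of $\lambda_i$ for $i\ge 2$ and the injectivity of the reciprocal map.
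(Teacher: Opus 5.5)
Your proposal is correct and follows essentially the same route as the paper: an entrywise computation using that left and right multiplication by the diagonal matrix $D_L$ scale rows and columns by $1/\lambda_i$ and $1/\lambda_j$, followed by the observation that $1/\lambda_i-1/\lambda_j$ vanishes exactly when $\lambda_i=\lambda_j$. Your explicit remarks that $\lambda_i>0$ for $i\geq 2$ (by connectedness) and that $t\mapsto 1/t$ is injective on $(0,\infty)$ spell out two points the paper leaves implicit.
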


\begin{proof}
Since $D_L$ is diagonal,
\[
(D_LM_L)_{ij}
=
\frac{1}{\lambda_i}(M_L)_{ij},
\]
whereas
\[
(M_LD_L)_{ij}
=
\frac{1}{\lambda_j}(M_L)_{ij}.
\]
Subtracting gives
\[
C_{ij}
=
\left(
\frac{1}{\lambda_i}
-
\frac{1}{\lambda_j}
\right)
(M_L)_{ij}.
\]
Finally,
\[
(M_L)_{ij}
=
u_i^\top\Delta u_j,
\]
which proves the stated formula.

The second assertion follows because
\[
\frac{1}{\lambda_i}-\frac{1}{\lambda_j}=0
\]
if and only if $\lambda_i=\lambda_j$.
\end{proof}

The proposition shows that the obstruction to the preservation of the
Laplacian eigenspace decomposition is precisely the presence of
nonzero matrix elements
\[
u_i^\top\Delta u_j
\]
between eigenspaces corresponding to distinct Laplacian eigenvalues.
If all such cross-eigenspace terms vanish, then the restrictions of
the resistance Laplacian and the ordinary graph Laplacian to
$\mathbf{1}^{\perp}$ commute and are simultaneously orthogonally
diagonalizable. Equivalently, every Laplacian eigenspace is invariant
under the resistance Laplacian. When a Laplacian eigenvalue has
multiplicity greater than one, the resistance Laplacian may further
decompose the corresponding Laplacian eigenspace.

The preceding matrix representation provides an algebraic description
of the resistance Laplacian. We now turn to its variational
interpretation. Since the resistance Laplacian is real symmetric, its
largest eigenvalue admits the usual Rayleigh characterization.
Expressing this optimization problem in terms of the canonical
decomposition provides a variational interpretation of the dominant
resistance-Laplacian eigenspace.

\begin{theorem}\label{thm:variational}
Let $\rho=\lambda_{\max}(\Rs).$ Then
\[
\rho
=
\max_{\substack{w\in\mathbf{1}^{\perp}\\ \|w\|=1}}
w^\top\Rs w.
\]
Moreover,
\[
\rho
=
2n\bar d
+
\max_{\substack{w\in\mathbf{1}^{\perp}\\ \|w\|=1}}
\left(
2w^\top L^\dagger w
+
nw^\top\Delta w
\right).
\]
\end{theorem}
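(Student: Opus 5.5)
The argument has two steps: first reduce the Rayleigh quotient to $\mathbf{1}^{\perp}$, then substitute the restricted decomposition of Corollary~\ref{thm:restriction}.

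\emph{Reduction to $\mathbf{1}^{\perp}$.} Since $\Rs$ is real symmetric, the Courant--Fischer characterization gives $\rho=\max_{\|w\|=1}w^\top\Rs w$ over all of $\mathbb{R}^n$. Write $w=a\mathbf{1}/\sqrt n+y$ with $y\in\mathbf{1}^{\perp}$. Because $\Rs\mathbf{1}=\mathbf{0}$ and $\Rs$ is symmetric, the cross terms vanish, so $w^\top\Rs w=y^\top\Rs y$.

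I first need $\Rs$ to be positive semidefinite. The paper states this in the introduction; alternatively it follows directly from the canonical decomposition. Indeed, for $y\perp\mathbf{1}$,
\[
y^\top\Rs y=\sum_{i<j}r_{ij}(y_i-y_j)^2\ge 0,
\]
which is the standard Laplacian-type identity for $\operatorname{diag}(R\mathbf{1})-R$ with nonnegative weights $r_{ij}$.

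Given this, $y^\top\Rs y\le \|y\|^2\max_{v\in\mathbf{1}^{\perp},\|v\|=1}v^\top\Rs v$, and $\|y\|\le 1$. So the maximum over unit vectors is attained on unit vectors of $\mathbf{1}^{\perp}$. The degenerate case $n=1$ is trivial, and for $n\ge2$ the maximum over $\mathbf{1}^{\perp}$ is nonnegative. Alternatively, I can argue spectrally: $\mathbf{1}^{\perp}$ is $\Rs$-invariant and spanned by eigenvectors of $\Rs$ whose eigenvalues are the remaining $n-1$ eigenvalues. The eigenvalue $0$ belonging to $\mathbf{1}$ cannot exceed them, since $\Rs\succeq0$. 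Either way, this gives the first identity.

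\emph{Substituting the decomposition.} For $w\in\mathbf{1}^{\perp}$ we have $Pw=w$, so Corollary~\ref{thm:restriction} (equivalently Theorem~\ref{canonical decomposition} with $T=P\Delta P$) yields
\[
w^\top\Rs w=2n\bar d\,\|w\|^2+2w^\top L^\dagger w+n\,w^\top P\Delta P w=2n\bar d+2w^\top L^\dagger w+n\,w^\top\Delta w
\]
for unit $w$. The constant $2n\bar d$ comes out of the maximum, which gives the second formula.

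\emph{Main obstacle.} The only delicate point is justifying that discarding the $\mathbf{1}$ direction does not lower the maximum, that is, that $\rho$ is not the eigenvalue $0$ of $\mathbf{1}$ while all eigenvalues on $\mathbf{1}^{\perp}$ are negative. I will handle this through positive semidefiniteness, via the quadratic-form identity above.
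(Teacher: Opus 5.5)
Your proposal is correct and follows essentially the same route as the paper: you use $\Rs\succeq 0$ and $\Rs\mathbf{1}=\mathbf{0}$ to place the top eigenvalue on $\mathbf{1}^{\perp}$, then substitute Corollary~\ref{thm:restriction} with $Pw=w$ and $\|w\|=1$. Your one addition is an explicit check of positive semidefiniteness via $y^\top\Rs y=\sum_{i<j}r_{ij}(y_i-y_j)^2$, which the paper takes for granted; note that this identity comes from the definition $\operatorname{diag}(R\mathbf{1})-R$ and holds for all $y$, so it does not follow from the canonical decomposition, whose deviation term $T$ is indefinite.
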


\begin{proof}
Since $\Rs$ is positive semidefinite and
$\Rs\mathbf{1}=0$, its largest eigenvalue is attained on
$\mathbf{1}^{\perp}$. Therefore, the Courant--Fischer
characterization \cite[Theorem 4.2.6]{horn2012matrix} gives
\[
\rho
=
\max_{\substack{w\in\mathbf{1}^{\perp}\\ \|w\|=1}}
w^\top\Rs w.
\]

Substituting the decomposition from
Corollary~\ref{thm:restriction}, we obtain
\[
w^\top\Rs w
=
2n\bar d\,w^\top w
+
2w^\top L^\dagger w
+
nw^\top P\Delta Pw.
\]
Since $w\in\mathbf{1}^{\perp}$, we have $Pw=w$. Moreover, since
$P$ is symmetric,
\[
w^\top P\Delta Pw=w^\top\Delta w.
\]
Finally, $\|w\|=1$ implies $w^\top w=1$, and hence
\[
w^\top\Rs w
=
2n\bar d
+
2w^\top L^\dagger w
+
nw^\top\Delta w.
\]
Taking the maximum over all unit vectors in
$\mathbf{1}^{\perp}$ proves the result.
\end{proof}

\begin{figure}
    \centering
    \includegraphics[width=0.7\linewidth]{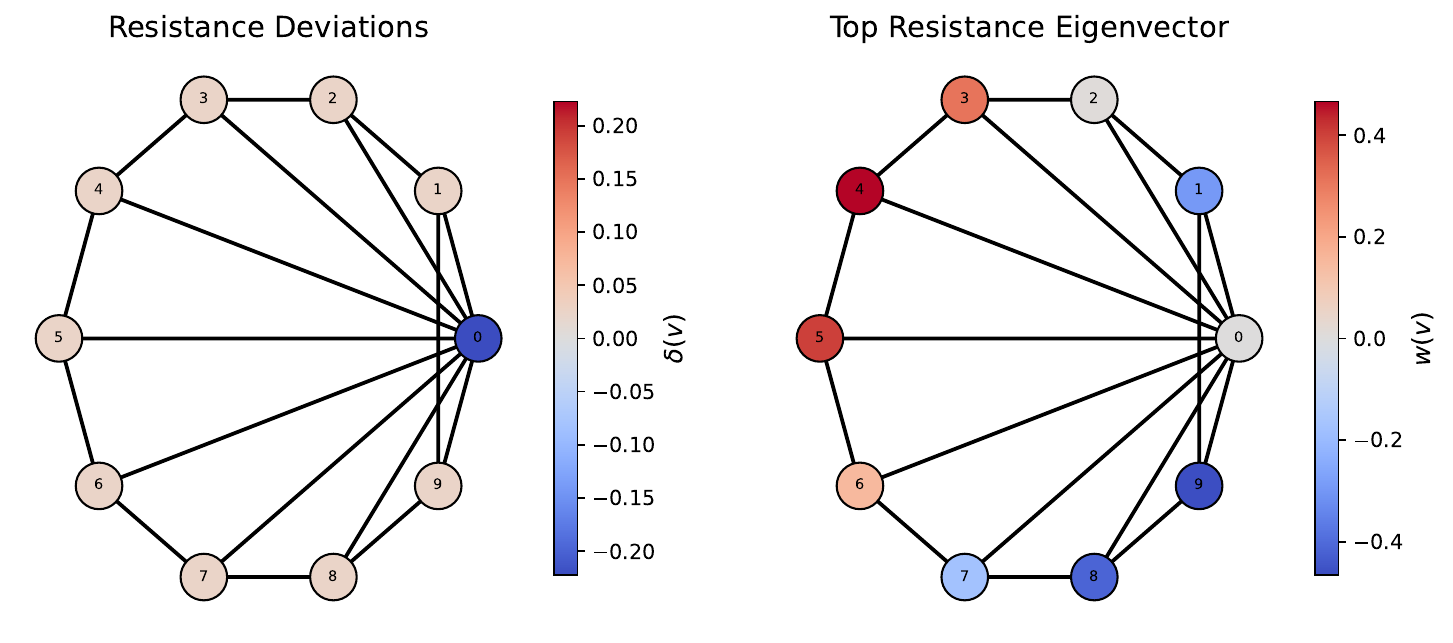}
    \caption{For a wheel graph, the left panel shows the resistance
    deviations, while the right panel shows a unit eigenvector
    associated with the largest resistance-Laplacian eigenvalue.}
    \label{fig:wheelcompare}
\end{figure}

For the eigenvector displayed in Figure~\ref{fig:wheelcompare}, the
globally remote vertices have the largest coordinate magnitudes,
whereas the globally accessible hub has a comparatively small
coordinate magnitude.

The variational characterization above naturally admits a coordinate
description in the Laplacian eigenbasis. Let
\[
w=\sum_{i=2}^{n}a_i u_i,
\]
where
\[
a=(a_2,\ldots,a_n)^\top\in\mathbb{R}^{n-1}.
\]
Since $\{u_2,\ldots,u_n\}$ is an orthonormal basis of
$\mathbf{1}^{\perp}$, the condition $\|w\|=1$ is equivalent to
$\|a\|=1$.

\begin{theorem}\label{thm:eigenbasisoptimization}
A vector $w=Ua$ is a unit eigenvector of $\Rs$ associated with
$\rho=\lambda_{\max}(\Rs)$ if and only if $a$ is a maximizer of
\[
\max_{\|a\|=1}
\left(
2a^\top D_La
+
na^\top M_La
\right).
\]
Equivalently, every such maximizer satisfies
\[
(2D_L+nM_L)a
=
(\rho-2n\bar d)a.
\]
\end{theorem}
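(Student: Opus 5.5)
The plan is to reduce everything to the $(n-1)\times(n-1)$ symmetric matrix $B:=U^{\mathsf T}\Rs U = 2n\bar d\,I_{n-1}+2D_L+nM_L$ from Theorem~\ref{thm:lapbasis}, and to translate eigen-statements about $\Rs$ on $\mathbf{1}^\perp$ into eigen-statements about $B$ via the isometry $a\mapsto Ua$. First, since $U$ has orthonormal columns spanning $\mathbf{1}^\perp$, the map $a\mapsto Ua$ is an isometric isomorphism $\mathbb{R}^{n-1}\to\mathbf{1}^\perp$, so $\|Ua\|=\|a\|$. Moreover, $\mathbf{1}^\perp$ is $\Rs$-invariant, so $\Rs U=UB$. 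This intertwining relation follows from $UU^{\mathsf T}=P$ and $\Rs P=P\Rs=\Rs$, the latter because $\Rs\mathbf{1}=0$ and $\Rs$ is symmetric. Consequently $\Rs Ua=\mu Ua$ if and only if $Ba=\mu a$, by injectivity of $U$.

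Next, I will identify $\rho$ with $\lambda_{\max}(B)$. By Theorem~\ref{thm:variational}, $\rho=\max_{w\in\mathbf{1}^\perp,\|w\|=1}w^{\mathsf T}\Rs w$. Substituting $w=Ua$ gives $\rho=\max_{\|a\|=1}a^{\mathsf T}Ba=\lambda_{\max}(B)$, and this equals $2n\bar d+\max_{\|a\|=1}(2a^{\mathsf T}D_La+na^{\mathsf T}M_La)$, because $a^{\mathsf T}(2n\bar d I)a=2n\bar d$ on the unit sphere. So maximizers of the stated objective are exactly the unit vectors $a$ with $a^{\mathsf T}Ba=\lambda_{\max}(B)$. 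By the standard Rayleigh quotient fact for a symmetric matrix (Courant--Fischer, as cited in the paper), these are precisely the unit eigenvectors of $B$ for $\lambda_{\max}(B)$. To see this, expand $a$ in an orthonormal eigenbasis of $B$: equality in $a^{\mathsf T}Ba\le\lambda_{\max}\|a\|^2$ forces all weight onto the top eigenspace.

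Combining the two steps, $a$ is a maximizer if and only if $Ba=\rho a$ with $\|a\|=1$, if and only if $\Rs(Ua)=\rho\,Ua$ with $\|Ua\|=1$, i.e.\ $w=Ua$ is a unit eigenvector for $\rho$. The caveat is that every eigenvector of $\Rs$ for $\rho$ lies in $\mathbf{1}^\perp$, so it has the form $Ua$. This holds when $\rho>0$, since the eigenvectors for distinct eigenvalues of a symmetric matrix are orthogonal to $\mathbf{1}$, which spans part of the $0$-eigenspace. We have $\rho>0$ because $\Rs\neq 0$ for $n\ge2$, e.g.\ its trace is $\sum_i t_i>0$. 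Finally, rewriting $Ba=\rho a$ by subtracting $2n\bar d\,a$ yields $(2D_L+nM_L)a=(\rho-2n\bar d)a$.

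The main obstacle is minor: justifying carefully that the top eigenvectors of $\Rs$ all lie in $\mathbf{1}^\perp$, which needs $\rho>0$, and the intertwining $\Rs U=UB$. Both follow from $\Rs\mathbf{1}=0$ and symmetry, so the argument is essentially bookkeeping on top of Theorems~\ref{thm:lapbasis} and~\ref{thm:variational}.
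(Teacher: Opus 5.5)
Your proof is correct and follows essentially the paper's route: substitute $w=Ua$ into the variational characterization of Theorem~\ref{thm:variational}, use Theorem~\ref{thm:lapbasis} to split off the constant $2n\bar d$, and apply Rayleigh--Ritz to the compressed matrix. The paper instead gets the eigen-equation for $2D_L+nM_L$ from a Lagrange-multiplier condition and leaves the return to $\Rs$ implicit in its appeal to Rayleigh--Ritz; your explicit intertwining $\Rs U=UB$, together with the check that every $\rho$-eigenvector lies in $\mathbf{1}^{\perp}$ because $\rho>0$, makes that step more rigorous.
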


\begin{proof}
Substituting
\[
w=Ua
\]
into the variational characterization established in
Theorem~\ref{thm:variational}, we obtain
\[
w^\top L^\dagger w
=
a^\top U^\top L^\dagger Ua
=
a^\top D_La,
\]
and similarly,
\[
w^\top\Delta w
=
a^\top U^\top\Delta Ua
=
a^\top M_La.
\]
Therefore,
\[
w^\top\Rs w
=
2n\bar d
+
2a^\top D_La
+
na^\top M_La.
\]
Since the first term is independent of $a$, maximizing
$w^\top\Rs w$ over unit vectors $w\in\mathbf{1}^{\perp}$ is
equivalent to maximizing
\[
2a^\top D_La+na^\top M_La
\]
subject to $\|a\|=1$.

The Lagrange multiplier condition for this constrained optimization
problem is
\[
(2D_L+nM_L)a=\mu a
\]
for some scalar $\mu$. Multiplying this equation by $a^\top$ and using
$\|a\|=1$ shows that
\[
\mu
=
2a^\top D_La
+
na^\top M_La.
\]
At a maximizer, $\mu$ is the largest eigenvalue of
$2D_L+nM_L$. Since
\[
\rho=2n\bar d+\mu,
\]
we obtain
\[
(2D_L+nM_L)a
=
(\rho-2n\bar d)a.
\]
The equivalence follows from the Rayleigh--Ritz characterization.
\end{proof}

The preceding theorem shows that the dominant resistance-Laplacian
eigenspace is determined by two distinct spectral mechanisms. The
diagonal matrix $D_L$ weights each Laplacian eigenmode independently,
assigning larger weights to modes corresponding to smaller positive
Laplacian eigenvalues. In contrast, $M_L$ may couple different
Laplacian modes according to the resistance deviations of the graph.
Thus, while $D_L$ preserves the ordinary Laplacian eigenspace
decomposition, $M_L$ may mix these modes in determining the dominant
eigenspace of the resistance Laplacian.

The coordinate description obtained above admits a simple
interpretation in vertex space. Although $M_L$ was introduced as the
matrix representation of
$\left.T\right|_{\mathbf{1}^{\perp}}$ in the Laplacian eigenbasis, its
quadratic form depends only on the graph signal itself. Consequently,
the contribution of the deviation term to the optimization problem
admits an intrinsic vertex-space description.

\begin{theorem}\label{thm:vertexenergy}
Let
\[
w=\sum_{i=2}^{n}a_i u_i
\]
be a unit vector in $\mathbf{1}^{\perp}$, and let
\[
a=(a_2,\ldots,a_n)^\top
\]
denote its coordinate vector in the Laplacian eigenbasis. Then
\[
a^\top M_La
=
\sum_{v\in V(G)}
\delta(v)\,w(v)^2.
\]
Consequently, the optimization problem in
Theorem~\ref{thm:eigenbasisoptimization} may be written as
\[
\max_{\substack{w\in\mathbf{1}^{\perp}\\\|w\|=1}}
\left(
2w^\top L^\dagger w
+
n\sum_{v\in V(G)}
\delta(v)w(v)^2
\right).
\]
\end{theorem}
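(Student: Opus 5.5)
The identity follows directly from the definitions of $M_L$ and $U$. Since $w=\sum_{i=2}^n a_iu_i=Ua$, the definition $M_L=U^{\mathsf T}\Delta U$ gives
\[
a^\top M_La=a^\top U^{\mathsf T}\Delta Ua=(Ua)^\top\Delta(Ua)=w^\top\Delta w .
\]
Because $\Delta=\operatorname{diag}(\delta_1,\ldots,\delta_n)$ is diagonal, $w^\top\Delta w=\sum_{i=1}^n\delta_iw_i^2$. Writing $\delta(v)$ and $w(v)$ for the entries indexed by the vertex $v$, this is exactly $\sum_{v\in V(G)}\delta(v)w(v)^2$. The same identity can also be read off from the proposition on the quadratic form of $T$: since $w\perp\mathbf{1}$, we have $w^\top Tw=\sum_i\delta_iw_i^2$, and $a^\top M_La=a^\top U^{\mathsf T}TUa=w^\top Tw$.

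For the reformulation of the optimization problem, I would use the bijection $a\mapsto Ua$ between unit vectors of $\mathbb{R}^{n-1}$ and unit vectors of $\mathbf{1}^\perp$. It is a bijection because the columns $u_2,\ldots,u_n$ form an orthonormal basis of $\mathbf{1}^\perp$, so $\|Ua\|=\|a\|$ and every $w\in\mathbf{1}^\perp$ equals $Ua$ with $a=U^{\mathsf T}w$.

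Under this substitution, the proof of Theorem~\ref{thm:eigenbasisoptimization} already gives $a^\top D_La=w^\top L^\dagger w$. The first part gives $a^\top M_La=\sum_v\delta(v)w(v)^2$. The objective $2a^\top D_La+na^\top M_La$ therefore equals $2w^\top L^\dagger w+n\sum_v\delta(v)w(v)^2$, and maximizing over $\|a\|=1$ is the same as maximizing over unit $w\in\mathbf{1}^\perp$. The two problems have the same optimal value, and their maximizers correspond under $w=Ua$.

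There is no real obstacle here. The only point needing care is that the bijection is genuinely onto the constraint set, so that the maximum is preserved and not merely bounded from one side.
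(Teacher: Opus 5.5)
Your proposal is correct and is essentially the paper's argument. The paper expands $(M_L)_{ij}=\sum_{v}\delta(v)u_i(v)u_j(v)$ and resums to $\sum_v\delta(v)w(v)^2$, which is just the coordinate form of your computation $a^{\mathsf T}U^{\mathsf T}\Delta Ua=w^{\mathsf T}\Delta w$; the reformulation of the optimization problem is the same substitution $w=Ua$, and you are slightly more careful than the paper in noting that $a\mapsto Ua$ maps the unit sphere of $\mathbb{R}^{n-1}$ onto the unit sphere of $\mathbf{1}^\perp$.
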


\begin{proof}
Since
\[
w=Ua,
\]
we have
\[
w(v)
=
\sum_{i=2}^{n}a_i u_i(v).
\]
Indexing the rows and columns of $M_L$ by the Laplacian eigenvectors
$u_2,\ldots,u_n$, we have
\[
(M_L)_{ij}
=
u_i^\top\Delta u_j
=
\sum_{v\in V(G)}
\delta(v)\,u_i(v)u_j(v).
\]
Therefore,
\begin{align*}
a^\top M_La
&=
\sum_{i,j=2}^{n}
a_i(M_L)_{ij}a_j\\
&=
\sum_{v\in V(G)}
\delta(v)
\sum_{i,j=2}^{n}
a_i a_j u_i(v)u_j(v)\\
&=
\sum_{v\in V(G)}
\delta(v)
\left(
\sum_{i=2}^{n}
a_i u_i(v)
\right)^2\\
&=
\sum_{v\in V(G)}
\delta(v)\,w(v)^2.
\end{align*}
This proves the first assertion.

The second assertion follows by substituting this identity into the
optimization problem established in
Theorem~\ref{thm:eigenbasisoptimization}.
\end{proof}

The preceding theorem provides an intrinsic interpretation of the
resistance-deviation term. Rather than acting only as an abstract
perturbation in Laplacian coordinates, it measures how the squared
coordinates of the graph signal are distributed over the vertices.
Vertices with positive resistance deviations contribute positively to
the objective, whereas vertices with negative resistance deviations
contribute negatively. Thus, the deviation term favors graph signals
whose energy is concentrated on globally remote vertices.

This gives a conceptual description of the dominant
resistance-Laplacian eigenspace. Indeed,
\[
w^\top L^\dagger w
=
\sum_{i=2}^{n}\frac{a_i^2}{\lambda_i},
\]
so the first term favors low-frequency Laplacian modes, since modes
corresponding to smaller positive Laplacian eigenvalues receive larger
weights. In contrast, the deviation term favors the placement of
signal energy on globally remote vertices. The dominant
resistance-Laplacian eigenspace therefore results from a balance
between low-frequency Laplacian structure and the resistance-deviation
profile of the graph.

Figure~\ref{fig:ERtop} illustrates this behavior for the displayed
$10$-vertex graph. For the unit eigenvector shown, the most globally
remote vertex, vertex~$8$, has the largest coordinate magnitude,
whereas the globally accessible vertex~$6$ has a comparatively small
coordinate magnitude.

\begin{figure}
    \centering
    \includegraphics[width=0.5\linewidth]{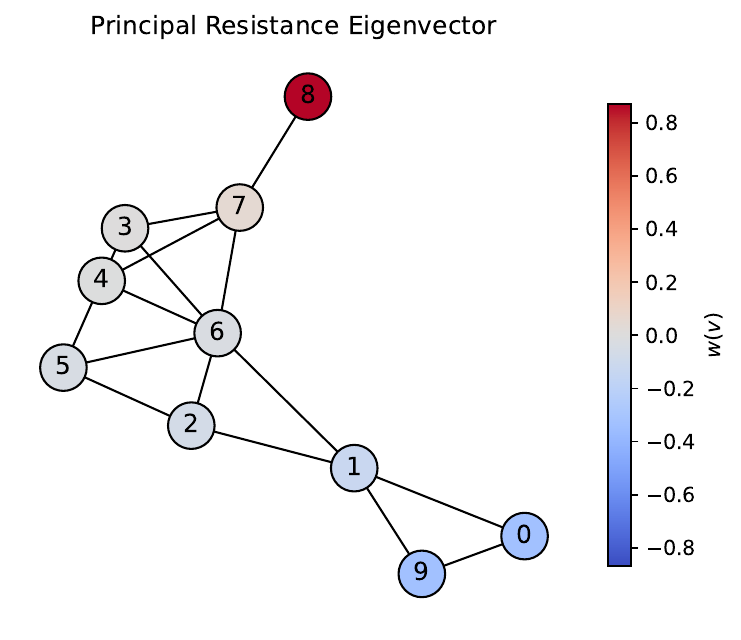}
    \caption{A unit eigenvector associated with the largest
    resistance-Laplacian eigenvalue of the displayed $10$-vertex
    graph.}
    \label{fig:ERtop}
\end{figure}


\section{Applications and Algorithmic Implications}\label{sec:applications}

The foundations of modern spectral graph partitioning were laid by Fiedler~\cite{Fiedler1973,Fiedler1975}, who established that the second smallest eigenvalue of the graph Laplacian encodes the connectivity of a graph and that the signs of the corresponding eigenvector induce connected graph partitions. 
These results provided the mathematical basis for a vast body of work on spectral partitioning, graph embeddings, and spectral clustering.

These ideas have evolved into spectral clustering, where graph Laplacian eigenvectors yield low-dimensional representations for standard clustering algorithms. 
This perspective is widely used in machine learning, computer vision, network science, and data analysis, inspiring extensive work on graph cuts, spectral relaxations, and graph embeddings~\cite{ShiMalik2000,NgJordanWeiss2001,vonLuxburg2007}, all based on the geometry of the classical graph Laplacian.

Motivated by the global geometry induced by effective resistance, Gupta, Lather and Balaji~\cite{gupta} proved a connected partition theorem for the resistance Laplacian: the sign partition of an eigenvector for its largest eigenvalue yields connected induced subgraphs, placing the dominant resistance eigenvector in a role analogous to the Fiedler vector for the ordinary Laplacian.
The aim of this section is to explore applications to data clustering and anomaly detection. 
At this point we do not claim to have new algorithms that beat the traditional methods. 
This is more of proof-of-concept exploration. 
The Python code is available at the following repository 

\url{https://github.com/priyavratd/resistance_laplacian}.

Classical spectral partitioning is closely related to RatioCut, Normalized Cut, and spectral clustering. These methods seek relaxations of combinatorial cut objectives whose optimizers are given by Laplacian eigenvectors. Our construction follows the same philosophy, but replaces edge-based separation by effective-resistance-based separation.

\subsection{A resistance-based graph partitioning objective}

Classical spectral partitioning solves a relaxed graph cut optimization problem, where the Fiedler vector is the optimizer. This vector yields an efficient approximation to otherwise NP-hard partitioning problems. This variational view is central to spectral graph theory and underlies many modern graph partitioning and clustering algorithms.

The variational characterization in Theorem \ref{thm:vertexenergy} motivates an analogous optimization principle for the resistance Laplacian. 
Instead of minimizing edge cuts defined by local adjacency, the objective uses vertex resistance deviations, capturing the graph’s global accessibility structure. 
This yields a resistance-based graph partitioning objective whose spectral relaxation is determined by the dominant eigenvector of the resistance Laplacian.

    
	\begin{definition}[Resistance cut]
Let $G=(V,E)$ be a connected graph, and let $r_{ij}$ denote the
effective resistance between vertices $i$ and $j$. For two nonempty
disjoint subsets $A,B\subseteq V$, the \emph{resistance cut} between
$A$ and $B$ is defined by
\[
\operatorname{ResCut}(A,B)
:=
\sum_{{i\in A,\,j\in B}} r_{ij}.
\]
When $V=A\mathbin{\dot\cup}B$, we call
$\operatorname{ResCut}(A,B)$ the resistance cut associated with the
bipartition $(A,B)$.
\end{definition}
	\begin{remark}
The classical graph cut measures the total weight of edges crossing the partition and therefore reflects only local adjacency. 
In contrast, the effective resistance $r_{ij}$ measures the global separation between two vertices by taking into account all paths joining them.
Consequently, $\mathrm{ResCut}(A, B)$ quantifies the total global separation between the two parts of the partition rather than merely the number or weight of crossing edges.
  \end{remark}

As in spectral clustering, we introduce cardinality-balanced cuts. 
    
	\begin{definition}[Resistance Ratio Cut (RRC)]
		\[
		\mathrm{RRC}(A, B)
		\;=\; \mathrm{ResCut}(A,B)\cdot\left(\frac{1}{|A|} + \frac{1}{|B|}\right).
		\]
	\end{definition}

The resistance ratio cut is a combinatorial optimization problem over discrete vertex partitions. 
As in the classical theory of spectral clustering, such optimization problems are NP-hard in general. 
A standard approach is therefore to replace the discrete indicator vectors by continuous variables, leading to a tractable spectral relaxation. 
The next theorem shows that this relaxation is precisely the variational problem studied in Section \ref{sec:spectral}.    

Note that we are not inventing a new relaxation technology; we are identifying the combinatorial objective whose standard spectral relaxation is the top eigenvector for the resistance Laplacian. 
The main aim is to provide the missing variational counterpart to the Fiedler-like theorem of \cite{gupta}.

\begin{theorem}[Resistance ratio-cut relaxation]
		\label{thm:rrc}
		The discrete optimization problem
		\[
		\max_{\substack{(A,B)\text{ balanced partition of }V}} \mathrm{RRC}(A,B)
		\]
		(where ``balanced'' means $n$ is even and $|A| = |B| = n/2$)
		has as its continuous relaxation the problem
		\[
		\max_{\bx \in \R^n,\; \norm{\bx}=1,\; \bx \perp \1} \bx^\top \calR\,\bx,
		\]
whose maximizers are precisely the dominant eigenvectors of the resistance Laplacian.
	\end{theorem}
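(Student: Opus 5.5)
We follow the RatioCut template. Encode a bipartition $(A,B)$ of $V$ by the indicator-type vector
\[
x_i=\begin{cases}\sqrt{|B|/|A|}, & i\in A,\\ -\sqrt{|A|/|B|}, & i\in B.\end{cases}
\]
This vector satisfies $\1^\top x=|A|\sqrt{|B|/|A|}-|B|\sqrt{|A|/|B|}=0$ and $\norm{x}^2=|B|+|A|=n$. The key step is to compute the quadratic form of $\calR$ on such a vector.

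Since $\calR=\operatorname{diag}(R\1)-R$ with $R$ symmetric and zero diagonal, it is a Laplacian-type matrix with edge weights $r_{ij}$. Hence
\[
x^\top\calR x=\tfrac12\sum_{i,j}r_{ij}(x_i-x_j)^2 .
\]
Only pairs with $i$ and $j$ in different parts contribute. For such pairs,
\[
(x_i-x_j)^2=\Bigl(\sqrt{|B|/|A|}+\sqrt{|A|/|B|}\Bigr)^2=\frac{n^2}{|A||B|}=n\Bigl(\frac1{|A|}+\frac1{|B|}\Bigr).
\]
Each unordered cross pair appears twice in the double sum, which cancels the factor $\tfrac12$. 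Therefore
\[
x^\top\calR x=n\,\operatorname{ResCut}(A,B)\Bigl(\frac1{|A|}+\frac1{|B|}\Bigr)=n\,\mathrm{RRC}(A,B).
\]

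It follows that maximizing $\mathrm{RRC}$ is equivalent to maximizing $x^\top\calR x/\norm{x}^2$ over this discrete family of vectors. In the balanced case $|A|=|B|=n/2$, the vectors are simply $\pm1$ vectors with zero sum. The relaxation then drops the discreteness and allows any $x\in\R^n$ with $x\perp\1$, rescaled to unit norm.

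It remains to identify the maximizers of the relaxed problem. The statement about them follows directly from Theorem~\ref{thm:variational}. Because $\calR$ is positive semidefinite with $\calR\1=0$, Rayleigh--Ritz on the invariant subspace $\1^\perp$ shows that the maximum equals $\rho=\lambda_{\max}(\calR)$. It also shows that the maximizers are exactly the unit vectors of the $\rho$-eigenspace; these automatically lie in $\1^\perp$ because $\rho>0$ for $n\ge2$.

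One caveat concerns the case where $\rho$ has multiplicity greater than one. Then "dominant eigenvectors" must be read as unit vectors of the top eigenspace, and I will state it that way.

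The main obstacle is purely expository: the problem must be posed with the normalization $\norm{x}^2=n$ versus $\norm{x}=1$ handled correctly, so that the discrete problem is exactly a restriction of the relaxed one. I will take care of this by normalizing $x/\sqrt n$.
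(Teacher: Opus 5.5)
Your proof is correct. In the balanced case $|A|=|B|=n/2$ your encoding reduces exactly to the paper's $\pm1$ indicator. Your computation $x^\top\mathcal{R}x=\tfrac12\sum_{i,j}r_{ij}(x_i-x_j)^2=n\,\mathrm{RRC}(A,B)$ is then the paper's Step~2, where it is written as $4\,\mathrm{ResCut}(A,B)$.

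The genuine difference is that you use the RatioCut-style vector with values $\sqrt{|B|/|A|}$ and $-\sqrt{|A|/|B|}$. This vector satisfies $x\perp\mathbf{1}$, $\|x\|^2=n$ and $x^\top\mathcal{R}x=n\,\mathrm{RRC}(A,B)$ for \emph{every} nontrivial bipartition. So the same Rayleigh-quotient problem relaxes RRC maximization over all bipartitions, and the balance hypothesis (and evenness of $n$) is not actually needed. The paper's $\pm1$ encoding has zero sum only when $|A|=|B|$. The paper's choice is marginally simpler and matches the problem exactly as posed; yours buys generality at no extra cost.

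You also handle the last step more carefully than the paper, which only says the maximum is attained at an eigenvector. You identify the maximizer set as the unit vectors of the $\rho$-eigenspace, allowing multiplicity. You also observe that this eigenspace lies in $\mathbf{1}^\perp$ because $\rho>0$ for $n\ge 2$. That is exactly what the word ``precisely'' in the statement requires.
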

	
	\begin{proof}
    
		\textbf{Step 1: Encoding the discrete problem.}
		Given a balanced bipartition $(A,B)$ with $|A|=|B|=n/2$, encode it via the indicator
		vector $\bx \in \R^n$ defined by
		\[
		x_i = \begin{cases} +1 & i \in A,\\ -1 & i \in B. \end{cases}
		\]
		Then $\sum_i x_i = |A| - |B| = 0$, so $\bx \perp \1$, and $\norm{\bx}^2 = n$.
		
		\textbf{Step 2: Expressing RRC via the quadratic form.}
		First, verify that:
		\[
		\bx^\top \calR\,\bx
		= \frac{1}{2}\sum_{i,j} r_{ij}(x_i - x_j)^2.
		\]
		For the $\pm 1$ indicator, $(x_i - x_j)^2 = 4$ when $i$ and $j$ are in opposite
		clusters, and $0$ when in the same cluster.  Since the sum contains both orientations of every
cross-pair and $r_{ij}=r_{ji}$, we obtain
		\[
		\bx^\top \calR \,\bx =
\frac{1}{2}
\left(
\sum_{\substack{i\in A\\j\in B}}
4r_{ij}
+
\sum_{\substack{i\in B\\j\in A}}
4r_{ij}
\right)\\
= 4 \sum_{\substack{i\in A,\,j\in B}} r_{ij}
		= 4\,\mathrm{ResCut}(A,B).
		\]
		Since $|A|=|B|=n/2$:
		\[
		\mathrm{RRC}(A,B) = \mathrm{ResCut}(A,B)\cdot\frac{4}{n} = \frac{1}{n}\,\bx^\top {\calR\,\bx}.
		\]
		Maximizing $\mathrm{RRC}(A,B)$ over balanced bipartitions is thus equivalent to
		\[
		\max_{\bx \in \{+1,-1\}^n,\; \bx\perp\1,\; \norm{\bx}^2 = n}
		\bx^\top \calR\,\bx.
		\]
		
		\textbf{Step 3: Continuous relaxation.}
		Relaxing $x_i \in \{+1,-1\}$ to $x_i \in \R$ and normalizing $\hat\bx = \bx/\norm{\bx}$
		yields the Rayleigh quotient problem:
		\[
		\max_{\hat\bx \in \R^n,\;\norm{\hat\bx}=1,\;\hat\bx\perp\1}
		\hat\bx^\top \calR\,\hat\bx.
		\]
		By the Courant--Fischer theorem, the maximum of the Rayleigh quotient of $\calR$
	over the orthogonal complement of $\ker\calR$ is precisely {$\rho = \lambda_{\max}(\calR)$}, attained at the corresponding eigenvector.	
	\end{proof}
	
	\begin{remark}[Contrast with standard spectral clustering]
The Fiedler vector minimizes $\bx^\top L\bx$ subject to $\norm{\bx}=1$,	$\bx\perp\1$, which relaxes the \emph{minimum} ratio cut.  The top eigenvector of $\calR$ relaxes the \emph{maximum} resistance ratio cut.  
The two objectives are structurally dual: $L$ penalizes placing adjacent vertices in different clusters, while $\calR$ rewards placing globally distant vertices in different clusters.
 By Theorem \ref{thm:vertexenergy}, the spectral relaxation of the resistance ratio cut seeks graph signals that simultaneously maximize 
 \[2w^\top L^{\dagger} w + n \sum_v \delta(v) w(v)^2. \]
Thus, unlike the Fiedler vector, which is determined entirely by the Laplacian geometry, the dominant resistance eigenvector also incorporates the resistance deviation landscape of the graph.
\end{remark}

We give a variational interpretation of the Gupta-Lather-Balaji partition \cite{gupta}. 
Instead of arising only from spectral properties of the resistance Laplacian, this partition is a continuous relaxation of an effective-resistance-based graph cut problem. 
Theorem \ref{thm:variational} shows that its objective splits into two complementary terms: one enforces smoothness with respect to the resistance geometry induced by the ordinary Laplacian, and the other promotes concentrating signal energy on vertices with positive resistance deviations.
Thus, resistance partitioning depends on both global graph connectivity and the distribution of resistance centrality.
Unlike classical spectral clustering, which minimizes edge cuts, the resistance formulation seeks parts that are maximally separated in the graph’s effective-resistance geometry.

	\subsection{Normalized Resistance Cut and the Generalized Eigenproblem}
While the resistance ratio cut balances the cardinalities of the two clusters, in many applications one instead seeks a balance with respect to the resistance geometry itself. This leads naturally to a normalized formulation analogous to the normalized cut of Shi and Malik \cite{ShiMalik2000}.

As in the classical $\mathrm{RatioCut}$ objective, the normalization discourages highly unbalanced partitions.
For notational simplicity, denote the matrix $\mathrm{diag}(R\mathbf{1})$ by $\mathcal{T}$ (its $i$th diagonal entry $t_i$ is the \emph{resistance transmission} of the vertex $i$).  

	\begin{definition}[Normalized resistance cut (NRC)]
For a nonempty set $C\subseteq V$, define its
\emph{transmission volume} by
\[
t(C)=\sum_{i\in C}t_i.
\]
For a nontrivial bipartition
$A\mathbin{\dot\cup}B=V$, the \emph{normalized resistance cut} is
defined by
\[
\mathrm{NRC}(A,B)
=
\mathrm{ResCut}(A,B)
\left(
\frac{1}{t(A)}+\frac{1}{t(B)}
\right).
\]
\end{definition}
Resistance-volume balancing takes into account the aggregate resistance centrality of each cluster. 
Consequently, NRC is better suited to graphs in which resistance centrality varies substantially across the vertex set.    
	\begin{theorem}[Spectral relaxation of NRC]
\label{thm:nrc}
The normalized resistance-cut maximization problem
\[
\max_{\substack{A\sqcup B=V\\A,B\neq\varnothing}}
\mathrm{NRC}(A,B)
\]
has the continuous relaxation
\[
\max_{\substack{\bx\neq\mathbf{0}\\
\bx\perp_{\mathcal{T}}\mathbf{1}}}
\frac{\bx^\top\Rs\bx}{\bx^\top \mathcal{T}\bx},
\]
where
\[
\bx\perp_{\mathcal{T}}\mathbf{1}
\quad\Longleftrightarrow\quad
\mathbf{1}^\top \mathcal{T}\bx=0.
\]
The maximum is the largest generalized eigenvalue
$\mu_{\max}^{(S)}$ of the matrix pair $(\Rs,\mathcal{T})$. Thus, the
maximizers are precisely the generalized eigenvectors satisfying
\[
\Rs\bx=\mu_{\max}^{(\mathcal{T})}\mathcal{T}\bx.
\]
Equivalently,
\[
\mu_{\max}^{(\mathcal{T})}
=
\lambda_{\max}\!\left(\Rs^{\mathrm{sym}}\right),
\qquad
\Rs^{\mathrm{sym}}
=
\mathcal{T}^{-1/2}\Rs \mathcal{T}^{-1/2}.
\]
\end{theorem}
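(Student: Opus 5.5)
We follow the pattern of the proof of Theorem~\ref{thm:rrc}, replacing the $\pm1$ encoding by a Shi--Malik type encoding adapted to the transmission weights. First, every $t_i>0$ when $n\ge 2$ (the graph is connected, so $r_{ij}>0$ for $i\neq j$). Hence $\mathcal{T}$ is positive definite and $t(A),t(B)>0$ for any nontrivial bipartition. Given $A\sqcup B=V$, we set $x_i=1/t(A)$ for $i\in A$ and $x_i=-1/t(B)$ for $i\in B$. Then
\[
\mathbf{1}^\top\mathcal{T}\bx=\frac{t(A)}{t(A)}-\frac{t(B)}{t(B)}=0,
\]
so $\bx\perp_{\mathcal{T}}\mathbf{1}$. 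We also have
\[
\bx^\top\mathcal{T}\bx=\frac{1}{t(A)}+\frac{1}{t(B)}.
\]

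Next we use the identity $\bx^\top\Rs\bx=\tfrac12\sum_{i,j}r_{ij}(x_i-x_j)^2$ from Step~2 of the proof of Theorem~\ref{thm:rrc}. It gives
\[
\bx^\top\Rs\bx=\mathrm{ResCut}(A,B)\left(\frac{1}{t(A)}+\frac{1}{t(B)}\right)^2.
\]
Dividing by $\bx^\top\mathcal{T}\bx$ yields
\[
\frac{\bx^\top\Rs\bx}{\bx^\top\mathcal{T}\bx}=\mathrm{NRC}(A,B).
\]
Thus NRC maximization is the maximization of this generalized Rayleigh quotient over the discrete family of two-valued vectors satisfying $\bx\perp_{\mathcal{T}}\mathbf{1}$. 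Dropping the two-valued restriction gives the stated relaxation.

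To identify the relaxed optimum, we substitute $\mathbf{y}=\mathcal{T}^{1/2}\bx$. The quotient becomes $\mathbf{y}^\top\Rs^{\mathrm{sym}}\mathbf{y}/\mathbf{y}^\top\mathbf{y}$, and the constraint becomes $\mathbf{y}\perp\mathcal{T}^{1/2}\mathbf{1}$. Since $\Rs\mathbf{1}=0$, we get $\Rs^{\mathrm{sym}}\mathcal{T}^{1/2}\mathbf{1}=\mathcal{T}^{-1/2}\Rs\mathbf{1}=0$. So $\mathcal{T}^{1/2}\mathbf{1}$ spans part of the kernel of the positive semidefinite matrix $\Rs^{\mathrm{sym}}$, and its orthogonal complement is invariant.

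By Courant--Fischer, the maximum over that complement is $\lambda_{\max}(\Rs^{\mathrm{sym}})$; this uses that the largest eigenvalue is positive, because $\Rs\neq0$ for $n\ge2$. The top eigenvector can be chosen orthogonal to $\mathcal{T}^{1/2}\mathbf{1}$. Every maximizer $\mathbf{y}$ satisfies $\Rs^{\mathrm{sym}}\mathbf{y}=\lambda_{\max}\mathbf{y}$. Translating back via $\bx=\mathcal{T}^{-1/2}\mathbf{y}$ gives $\Rs\bx=\mu_{\max}\mathcal{T}\bx$. Conversely, any generalized eigenvector for $\mu_{\max}>0$ satisfies $\mathbf{1}^\top\mathcal{T}\bx=\mu_{\max}^{-1}\mathbf{1}^\top\Rs\bx=0$, so it is feasible and attains the maximum. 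This gives the equality $\mu_{\max}^{(\mathcal{T})}=\lambda_{\max}(\Rs^{\mathrm{sym}})$.

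The main point needing care is the constraint handling. We must check that the $\mathcal{T}$-orthogonality constraint corresponds exactly to Euclidean orthogonality to a null vector of $\Rs^{\mathrm{sym}}$. Only then is the constrained maximum the unconstrained top generalized eigenvalue rather than something smaller. We also note that the superscript $(S)$ in the statement is evidently a typo for $(\mathcal{T})$.
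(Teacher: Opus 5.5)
Your proposal is correct and follows essentially the same route as the paper: the same encoding $x_i=1/t(A)$ on $A$ and $x_i=-1/t(B)$ on $B$, the same evaluation of $\bx^\top\Rs\bx$ and $\bx^\top\mathcal{T}\bx$, and the same substitution $\mathcal{T}^{1/2}\bx$. Your extra checks make explicit two steps the paper only asserts: that $\mathcal{T}^{1/2}\mathbf{1}\in\ker\Rs^{\mathrm{sym}}$, so the $\mathcal{T}$-orthogonality constraint does not lower the maximum, and that every generalized eigenvector for $\mu_{\max}>0$ is automatically feasible; you are also right that the superscript $(S)$ is a typo for $(\mathcal{T})$.
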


\begin{proof}
Given a nontrivial bipartition
$A\sqcup B=V$, define
\[
x_i=
\begin{cases}
\dfrac{1}{t(A)}, & i\in A,\\[2mm]
-\dfrac{1}{t(B)}, & i\in B.
\end{cases}
\]
Then
\[
\mathbf{1}^\top \mathcal{T}\bx = \sum_{i\in A}\frac{t_i}{t(A)} - \sum_{j\in B}\frac{t_j}{t(B)} = 1-1 = 0,
\]
so $\bx\perp_{\mathcal{T}}\mathbf{1}$.

Using the symmetry of the resistance distances, we obtain
\begin{align*}
\bx^\top\Rs\bx
&= \frac{1}{2} \sum_{i,j=1}^{n}r_{ij}(x_i-x_j)^2\\ 
&= \sum_{\substack{i\in A\\j\in B}} r_{ij}
\left(
\frac{1}{t(A)}+\frac{1}{t(B)}
\right)^2\\
&=
\mathrm{ResCut}(A,B)
\left(
\frac{1}{t(A)}+\frac{1}{t(B)}
\right)^2.
\end{align*}
Moreover,
\[
\bx^\top \mathcal{T}\bx
=
\frac{t(A)}{t(A)^2}
+
\frac{t(B)}{t(B)^2}
=
\frac{1}{t(A)}+\frac{1}{t(B)}.
\]
Therefore,
\[
\frac{\bx^\top\Rs\bx}{\bx^\top \mathcal{T}\bx}
=
\mathrm{ResCut}(A,B)
\left(
\frac{1}{t(A)}+\frac{1}{t(B)}
\right)
=
\mathrm{NRC}(A,B).
\]

Dropping the two-valued constraint on $\bx$ gives the continuous
relaxation
\[
\max_{\substack{\bx\neq\mathbf{0}\\
\bx\perp_{\mathcal{T}}\mathbf{1}}}
\frac{\bx^\top\Rs\bx}{\bx^\top \mathcal{T}\bx}.
\]
Since $r_i>0$ for every vertex, $S$ is positive definite. Under the
substitution
\[
\widehat{\bx}=\mathcal{T}^{1/2}\bx,
\]
we have
\[
\bx^\top \mathcal{T}\bx=\|\widehat{\bx}\|^2
\]
and
\[
\bx^\top\Rs\bx
=
\widehat{\bx}^{\top}
\Rs^{\mathrm{sym}}
\widehat{\bx}.
\]
Furthermore, with
\[
\mathbf{1}_{\mathcal{T}}=\mathcal{T}^{1/2}\mathbf{1},
\]
the orthogonality condition becomes
\[
\mathbf{1}^\top \mathcal{T}\bx=0
\quad\Longleftrightarrow\quad
\widehat{\bx}^{\top}\mathbf{1}_{\mathcal{T}}=0.
\]
Consequently,
\[
\max_{\substack{\bx\neq\mathbf{0}\\
\bx\perp_\mathcal{T}\mathbf{1}}}
\frac{\bx^\top\Rs\bx}{\bx^\top \mathcal{T}\bx}
=
\max_{\substack{\widehat{\bx}\neq\mathbf{0}\\
\widehat{\bx}\perp\mathbf{1}_{\mathcal{T}}}}
\frac{
\widehat{\bx}^{\top}
\Rs^{\mathrm{sym}}
\widehat{\bx}
}{
\|\widehat{\bx}\|^2
}
=
\lambda_{\max}\!\left(\Rs^{\mathrm{sym}}\right)
=
\mu_{\max}^{(\mathcal{T})}.
\]
The maximizers are precisely the eigenvectors associated with
$\lambda_{\max}(\Rs^{\mathrm{sym}})$. Under the transformation
$\widehat{\bx}=\mathcal{T}^{1/2}\bx$, the eigenvalue equation for
$\Rs^{\mathrm{sym}}$ is equivalent to
\[
\Rs\bx=\mu_{\max}^{(\mathcal{T})}\mathcal{T}\bx.
\]
This completes the proof.
\end{proof}

Figure \ref{fig:compareparts} shows the Fiedler and Resistance partitions applied to the $10$-vertex graph introduced earlier. 
The globally remote vertex $8$ and two other nearby vertices which have positive coordinates, form a part and the remaining $6$ vertices form another part, both these parts have roughly equal resistance volume. 
On the other hand, the Fiedler strategy produces parts of equal size. 
\begin{figure}
    \centering
    \includegraphics[width=0.65\linewidth]{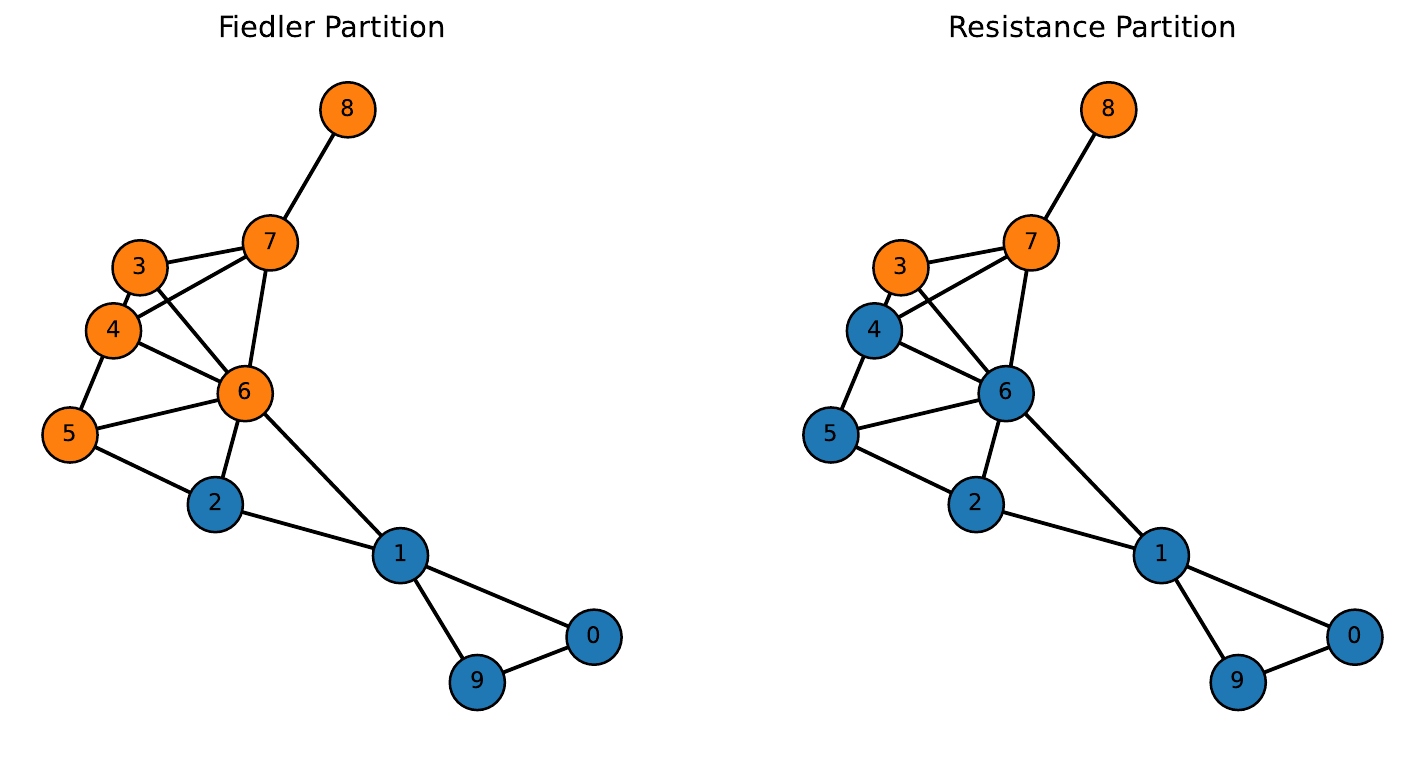}
    \caption{Partition comparison}
    \label{fig:compareparts}
\end{figure}

\subsection{Data clustering}

We use the spectral relaxation from the previous subsection to define a graph partitioning algorithm based on the dominant eigenvector of the resistance Laplacian. Here, we study its behavior on several benchmark datasets and compare it with classical clustering methods, including spectral clustering. Our aim is not just to compare performance, but to understand how the resistance-based embedding reflects the spectral decomposition from Section \ref{sec:spectral}. We focus on when the resistance embedding resembles the classical Fiedler embedding and when resistance deviations lead to a distinctly different partition.

To construct a connected graph for each dataset, we use a weighted $k$-nearest-neighbor graph using the Euclidean distance. 
Edge weights are assigned using a Gaussian similarity kernel,
\[ w_{ij} = \exp\left( -\frac{||x_i - x_j ||^2}{2\sigma^2}\right),\]
where $\sigma$ is chosen according to the local scale of the dataset. 
We then compute both the ordinary graph Laplacian and the resistance Laplacian of the resulting graph. 
The classical spectral clustering algorithm uses the Fiedler vector of the Laplacian, whereas the proposed method uses an eigenvector corresponding to the largest eigenvalue of the resistance Laplacian. 
In both cases, the one-dimensional embedding is partitioned by the sign of the corresponding eigenvector, yielding a bipartition of the dataset.

The theory predicts three clustering regimes according to the size of $M_L$ relative to $D_L$: (i) where $M_L$ is negligible, here resistance and Fiedler eigenvectors are very close; (ii) when $M_L$ is moderate we get same partition but distorted embedding; (iii) when $M_L$ is dominant the energy concentrates on high-$\delta$ vertices. 
The real-life datasets are chosen as empirical instances of these three regimes. 
We now describe these datasets. 

We illustrate the method on several benchmark datasets from the UCI Machine Learning Repository: the \textbf{Iris} dataset of iris flower measurements, the \textbf{Wine} dataset of chemical analyses of wines from different cultivars, and the \textbf{Digits} dataset of handwritten digit images. 
These datasets have varying geometric complexity and thus provide a suitable setting for comparing the resistance embedding with the classical Fiedler embedding.
For each dataset, we compare the partitions from the two spectral methods, visualize the corresponding one-dimensional embeddings, and relate the results to the theoretical predictions of Theorems \ref{thm:variational} and \ref{thm:vertexenergy}.

The Iris dataset contains morphological measurements of iris flowers from three species. 
We consider the binary classification problem involving the Setosa and Versicolor classes, which are known to be almost linearly separable.
This provides a simple yet informative benchmark for comparing the two spectral embeddings.

\begin{figure}
    \centering
    \includegraphics[width=0.72\linewidth]{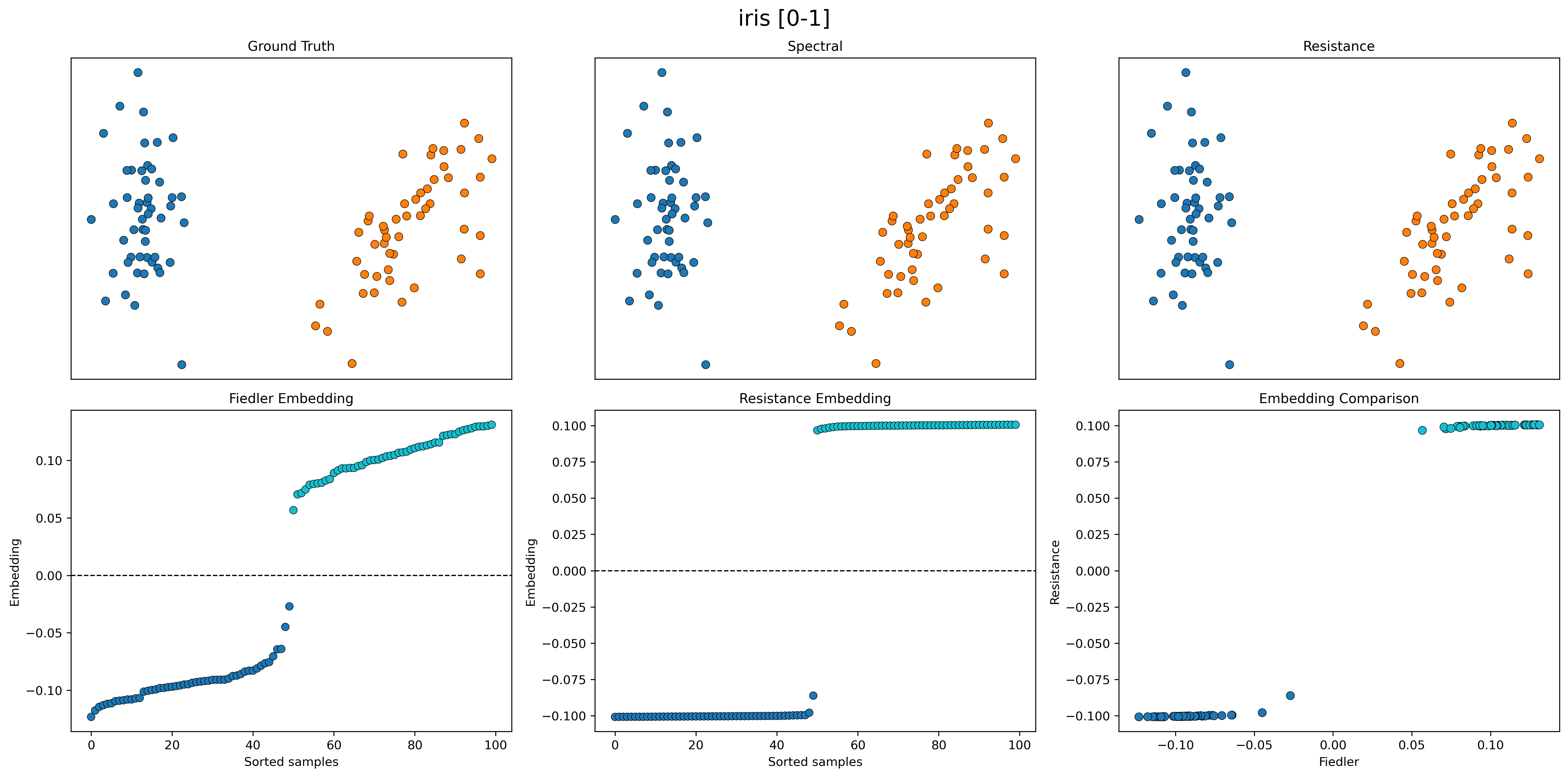}
    \caption{Clustering the Iris dataset}
    \label{fig:iris}
\end{figure}
As shown in Figure \ref{fig:iris}, both the classical spectral method and the resistance-based method almost perfectly recover the true partition. 
Their one-dimensional embeddings, however, differ noticeably: the Fiedler embedding varies smoothly within each cluster, while the resistance embedding is nearly constant within each class, yielding a much sharper separation. 
The embedding comparison plots $(u(i), w(i))$, where $u(i)$ is the $i$th coordinate of the Fiedler vector and $w(i)$ is the $i$th coordinate of the top resistance vector, showing that the two coordinates are strongly but nonlinearly correlated.

This example shows that similar partitions can arise from substantially different spectral embeddings. 
By Theorem \ref{thm:vertexenergy}, resistance optimization balances a smoothness term from the Laplacian pseudoinverse with a heterogeneous term from the resistance deviations. 
Although this heterogeneous correction does not change the partition, it significantly redistributes the embedding coordinates, increasing the contrast between clusters and illustrating how the resistance Laplacian alters the embedding geometry while preserving the partition.

The Wine dataset contains chemical measurements of wines from three cultivars. 
Following the standard benchmark protocol, we focus on the binary classification problem for classes 1 and 2. 
These classes are well separated in feature space, making this dataset a representative example of a graph with simple global geometry.

\begin{figure}
    \centering
    \includegraphics[width=0.75\linewidth]{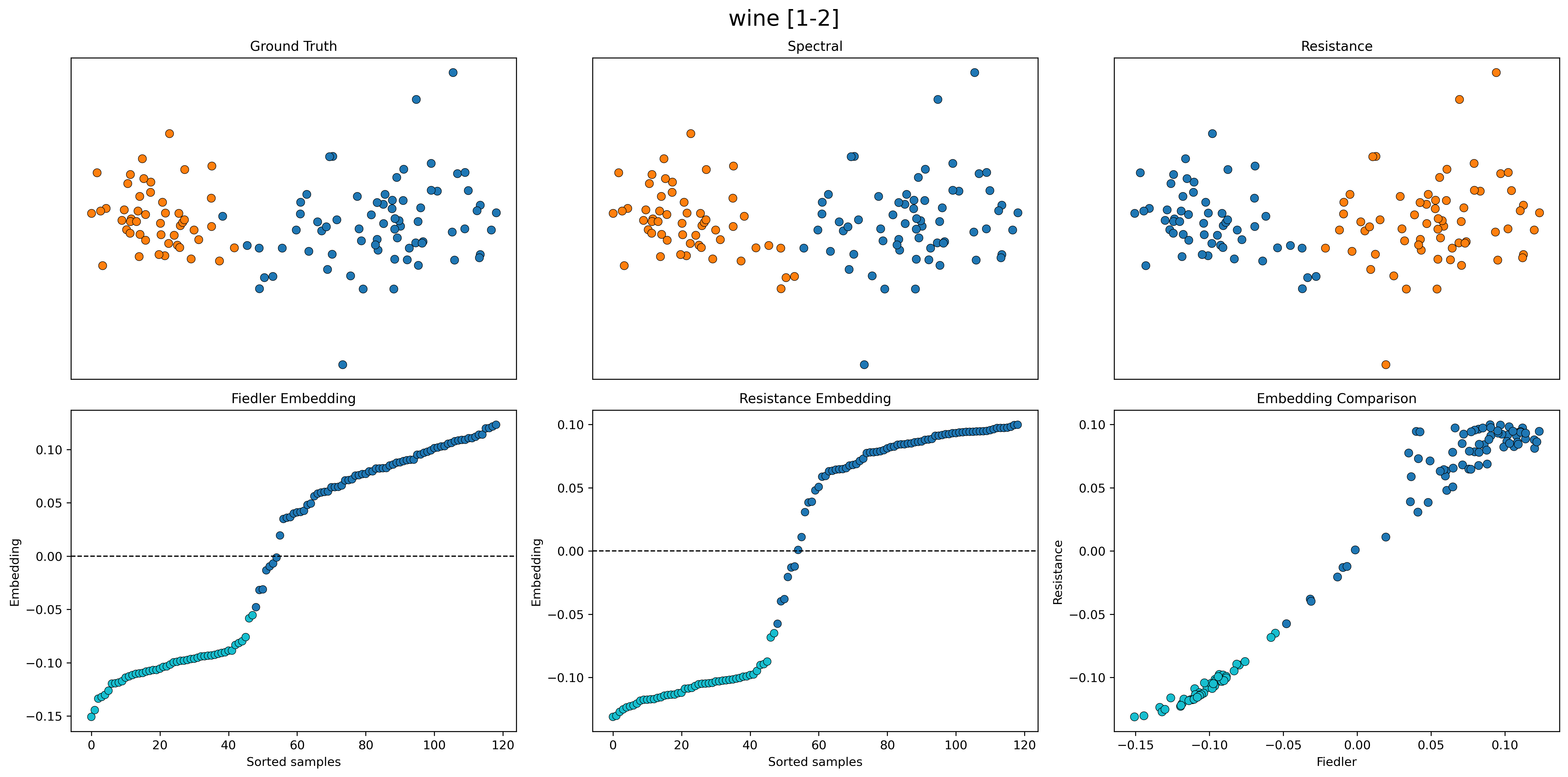}
    \caption{Clustering the Wine dataset}
    \label{fig:wine}
\end{figure}

Figure \ref{fig:wine} compares partitions from the classical Fiedler vector and the dominant resistance eigenvector. 
Both methods recover the class structure and yield nearly identical partitions. 
Their sorted one-dimensional embeddings have similar profiles, with a clear transition between the two clusters, and the embedding comparison plot shows the two coordinates are almost linearly related.

This close agreement implies that resistance deviations introduce only a weak heterogeneous correction for this graph. 
Thus the diagonal term $2D_L$ dominates the optimization in Theorem \ref{thm:eigenbasisoptimization}, keeping the resistance eigenvector close to the classical Fiedler vector. 
Here, the resistance Laplacian essentially preserves the geometry of the ordinary Laplacian while yielding an equivalent partition.

The Digits dataset consists of grayscale images of handwritten digits encoded as pixel-intensity vectors. 
We focus on binary classification of digits $3$ and $8$, whose shapes overlap substantially in feature space, yielding a harder clustering task than before.

\begin{figure}
    \centering
    \includegraphics[width=0.75\linewidth]{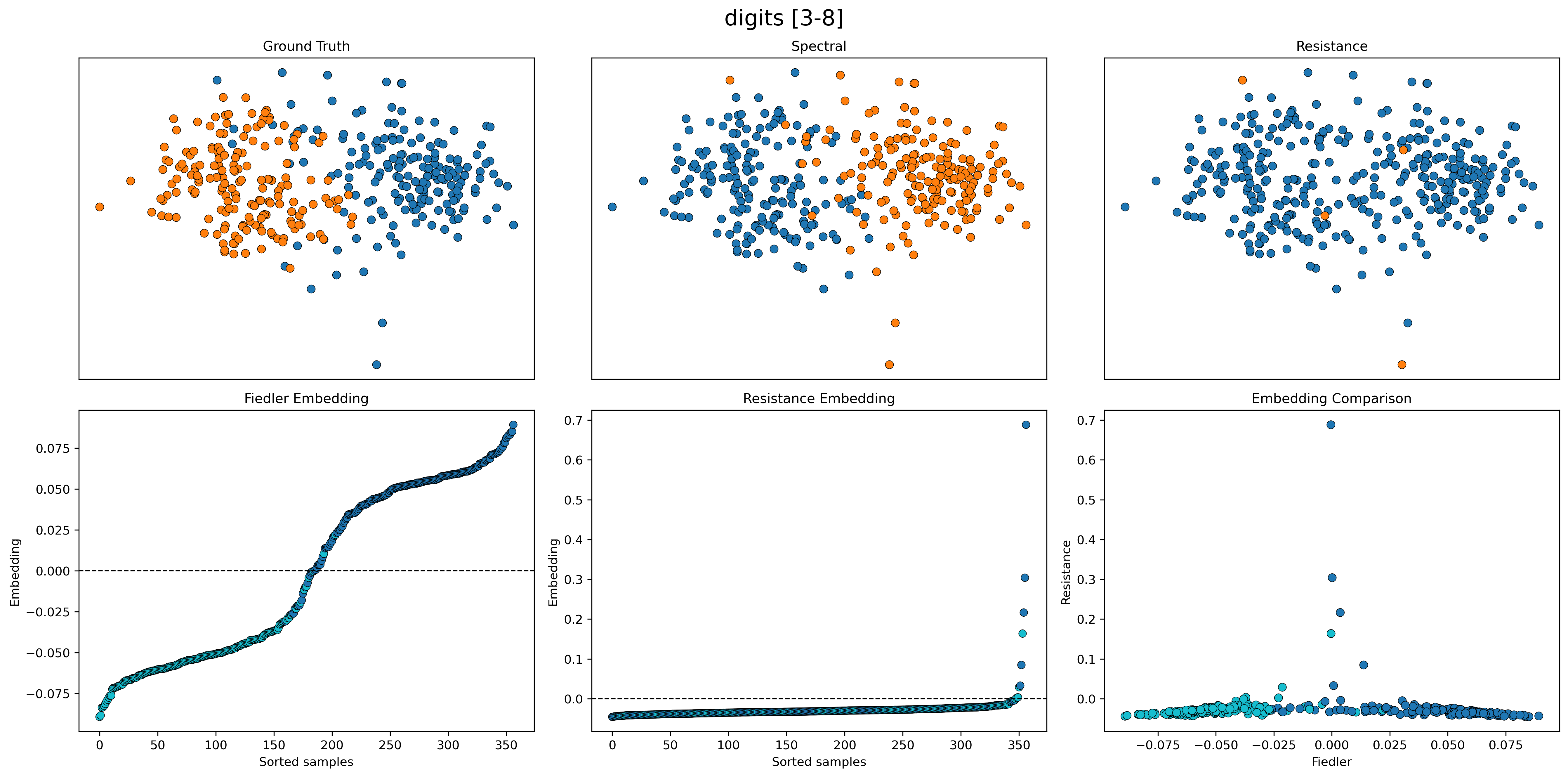}
    \caption{Clustering digits 3 and 8}
    \label{fig:digits}
\end{figure}

Figure \ref{fig:digits} shows a marked contrast between the two spectral methods. 
Classical spectral clustering recovers the two main groups with a moderate number of misclassifications. 
The resistance partition, however, is driven by a few observations with very large embedding values, while most samples cluster near zero. 
This is evident in the sorted resistance embedding and in the comparison plot, where resistance coordinates differ strongly from Fiedler coordinates.

This aligns with the spectral decomposition developed earlier. 
Theorem \ref{thm:vertexenergy} shows that resistance optimization adds a term involving resistance deviations, which favors concentrating signal energy on vertices with large positive deviations. 
Thus the leading resistance eigenvector no longer spreads its energy across the graph but emphasizes a small subset of observations that are globally distant in effective-resistance geometry. 
Instead of approximating the Fiedler embedding, the resistance embedding reflects a different notion of global accessibility, producing a qualitatively different partition.

The three examples above illustrate the regimes predicted by the spectral theory in Section \ref{sec:spectral}. 
When resistance deviations are nearly uniform (Wine), the resistance and Fiedler embeddings are almost identical. 
As the deviation term becomes more influential (Iris), the embedding changes but the partition is preserved. 
When heterogeneity dominates (Digits), the resistance embedding diverges sharply from the Fiedler embedding and highlights vertices that are globally peripheral in effective-resistance geometry. 
These results support the earlier decomposition and show that the resistance Laplacian yields a genuinely new geometric representation of graph-structured data, not just another clustering method.

\subsection{An exploratory anomaly detection experiment}
Anomaly detection is a fundamental problem in data analysis, with a broad range of approaches based on statistical, distance, density, and structural notions of abnormality; see, for example, the survey of Chandola, Banerjee, and Kumar \cite{Chandola2009}.
When the observations are represented by a graph, the graph structure itself provides additional information that can be exploited for detecting anomalous observations; see Akoglu, Tong, and Koutra \cite{Akoglu2015} for a survey of graph-based anomaly detection methods. 
More generally, graph-based anomaly detection exploits structural information in the graph to identify observations that are atypical relative to the rest of the data.
From this perspective when we consider the interpretation of the resistance Laplacian developed earlier, it is natural to ask whether its dominant eigenvector can provide a spectral signature of globally isolated observations.

The above question is motivated by existing spectral approaches to outlier detection, in which data are represented by a similarity graph and anomalous observations are identified through the spectral structure of the associated graph Laplacian; see, for example, \cite{dang2013} and \cite{yu2010}. 
In these approaches, weakly connected or isolated observations can manifest themselves through distinctive entries or supports of Laplacian eigenvectors. 
Our perspective differs in that we replace the usual local similarity-based notion of connectivity by effective resistance, and hence investigate whether a resistance-Laplacian eigenvector can reveal anomalies that are globally poorly accessible within the data graph.
Our goal is not to propose a new anomaly detection algorithm, but to provide preliminary evidence that the proposed spectral embedding captures global isolation and may be useful for anomaly detection.

We test this on two synthetic datasets: Gaussian blobs and the classical two-moons data. 
In each, we inject three artificial outliers, build a weighted $k$-nearest neighbor graph, form the resistance Laplacian, and compute its dominant eigenvector $w$. 
Empirically, isolated vertices tend to have small coordinates in this eigenvector, so we assign each vertex the score
\[s(v)= w_{\max} - |w(v)|,\]
where $w_{\max}$ is the maximum eigenvector coordinate. 
Vertices with larger $s(v)$ are deemed more anomalous. 
This score is only a simple quantitative summary of the experiments, not a thorough comparison with existing anomaly detection methods.

\begin{figure}
    \centering
    \includegraphics[width=0.72\linewidth]{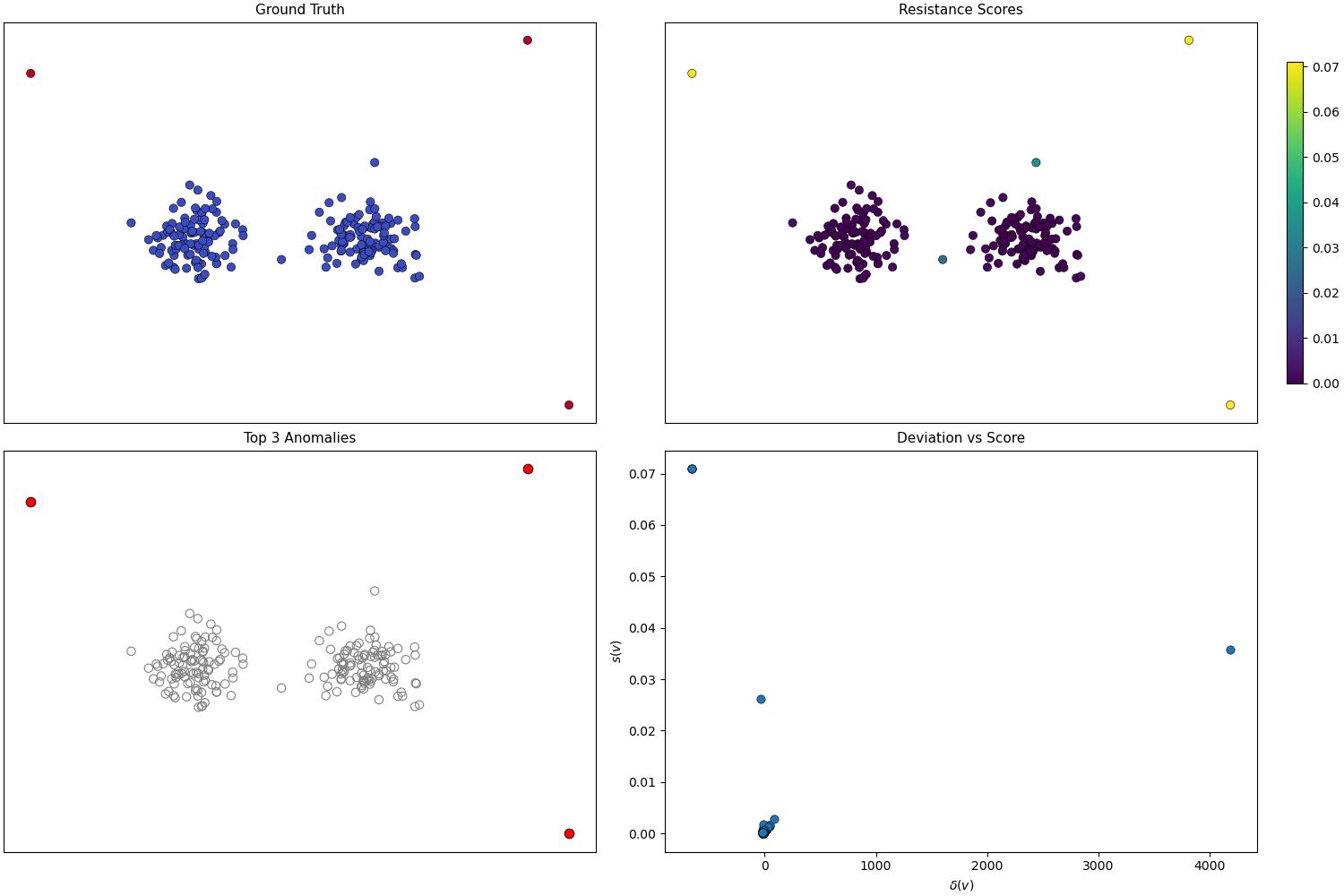}
    \caption{Anomaly detection in two blobs dataset}
    \label{fig:blobs}
\end{figure}

Figure \ref{fig:blobs} shows two Gaussian clusters with three added outliers. The resistance embedding clearly separates the outliers from the dense clusters and assigns them the highest anomaly scores, so all three injected outliers appear among the top three ranked observations.

The scatter plot of resistance deviations versus the dominant resistance coordinates offers further insight. The outliers lie at extreme positions and have much smaller eigenvector coordinates than typical vertices. This indicates a strong link between electrical isolation and the dominant resistance eigenvector, though the exact theoretical basis is still unclear.

\begin{figure}
    \centering
    \includegraphics[width=0.7\linewidth]{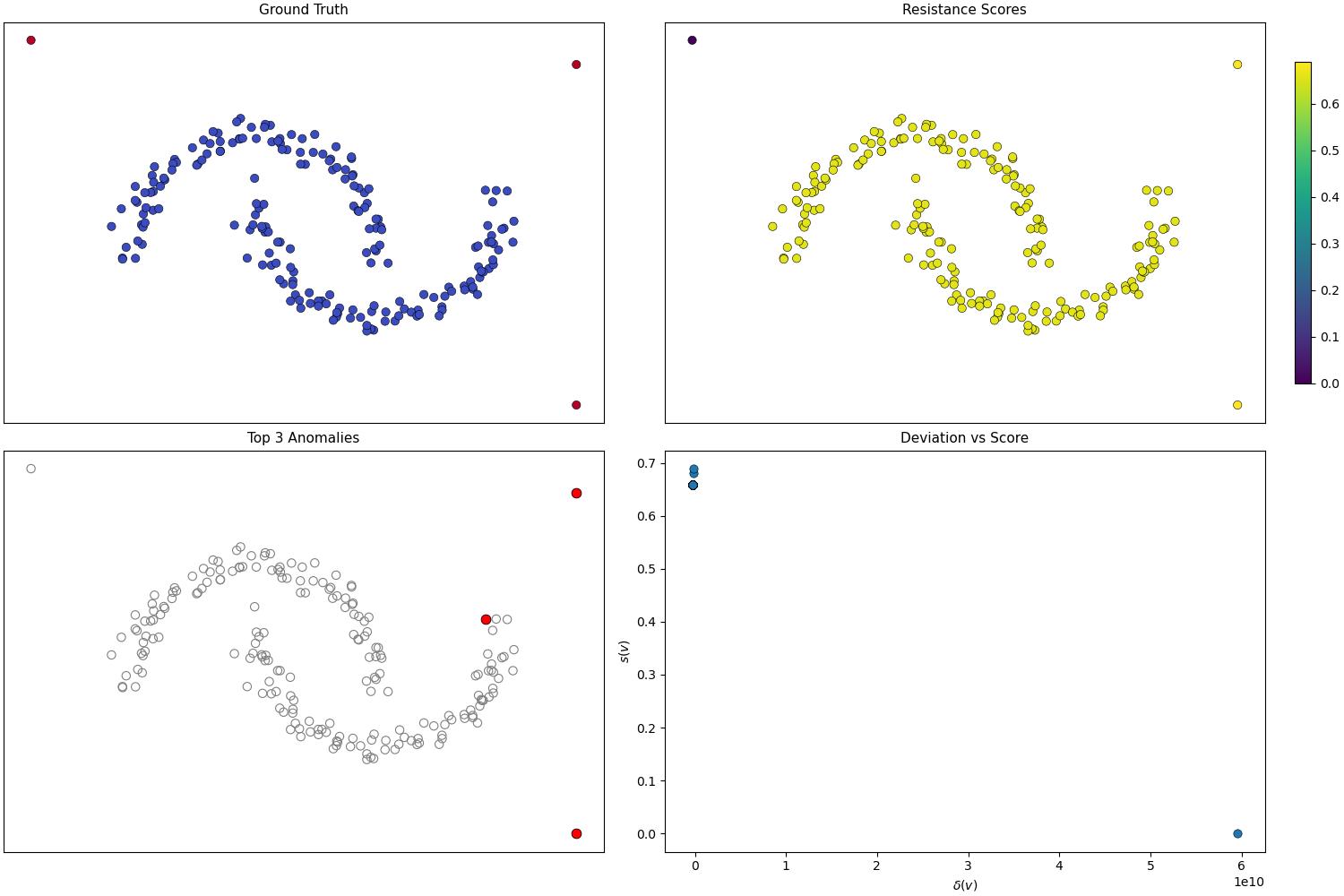}
    \caption{Anomaly detection in the two moons dataset}
    \label{fig:moons}
\end{figure}

Figure \ref{fig:moons} repeats the experiment on the nonlinear two-moons dataset, whose geometry is substantially more challenging than in the Gaussian example. 
The resistance embedding correctly identifies two added outliers, while the third is ranked similarly to naturally occurring peripheral points, indicating that the proposed score is sensitive to both Euclidean distance and intrinsic graph structure.
In particular, the deviation score of the unidentified outlier is near-zero, hence it doesn't receive an anomolous score. 

These preliminary results are encouraging but show that considerably more work is needed before the spectral score can be considered competitive for anomaly detection. Open problems include improved graph construction, alternative score functions, normalization schemes, and theoretical guarantees linking the dominant resistance eigenvector to graph-theoretic isolation.

These experiments should thus be viewed as proof-of-concept rather than a comprehensive empirical study. Their main purpose is to show that the dominant resistance eigenvector captures meaningful information about globally isolated vertices and to motivate further study of resistance-Laplacian-based anomaly detection methods.

\section{Concluding remarks}\label{sec:conclusion}
In this paper, we introduced the resistance Laplacian as a spectral
object encoding the global resistance geometry of a connected graph.
We analyzed its structural and spectral properties, determined the
spectrum and inertia of the resistance-deviation operator, and
interpreted the dominant eigenspace of the resistance Laplacian through
the associated resistance embedding. These results show that the resistance Laplacian offers a geometric viewpoint complementary to combinatorial and normalized Laplacians, emphasizing global accessibility rather than local adjacency.

Preliminary clustering and anomaly detection experiments suggest that the dominant resistance embedding captures meaningful geometric structure in graph-structured data. Although exploratory, these results indicate that resistance-based spectral methods have promise for data analysis and may yield useful algorithmic tools.

Computationally, a key limitation is that the resistance Laplacian is typically dense, making direct eigen-decomposition expensive for large graphs. Our algorithms are therefore aimed at moderate-sized graphs and at illustrating the theory. Developing scalable methods that avoid explicitly forming and diagonalizing dense resistance matrices is an important direction for future work, for example via iterative eigen-solvers using matrix-vector products, low-rank approximations, or graph sparsification. Such advances would extend resistance-based spectral methods to large-scale graph learning.

This work also raises several theoretical and algorithmic questions for further study.

\begin{enumerate}
    \item \textbf{Spectral characterization of resistance embeddings}: The experiments in this paper indicate a close link between resistance deviations and the coordinates of the dominant resistance eigenvector. Formally characterizing this relationship and developing graph-theoretic interpretations of resistance embeddings remains an open theoretical problem, one that could clarify the geometric information encoded by the resistance Laplacian.
    \item \textbf{Resistance-based anomaly scores}: Preliminary anomaly detection experiments suggest that the dominant resistance embedding captures information about globally isolated vertices. However, the anomaly score used here is heuristic and only exploratory. Developing principled scoring functions based on the resistance spectrum, with theoretical guarantees and stability results, is an important direction for future work.
    \item \textbf{Scalable numerical algorithms}: The dense nature of the resistance Laplacian motivates the development of efficient computational techniques capable of handling graphs with millions of vertices. Matrix-free iterative eigensolvers, randomized approximation methods, hierarchical representations, and distributed algorithms offer promising avenues for reducing both computational complexity and memory requirements while preserving the essential spectral information.
    \item \textbf{Applications in graph learning and data analysis}: The resistance Laplacian offers a natural global accessibility measure that complements existing spectral methods. A systematic empirical comparison of resistance-based embeddings and classical graph Laplacians on tasks such as clustering, anomaly detection, graph partitioning, manifold learning, and graph representation learning would clarify the strengths and limitations of this new spectral framework.
\end{enumerate}

\section*{Acknowledgments}
Both the authors are partially supported by a grant from the Infosys Foundation. 
The authors would like to thank Shreya Singh, who developed some of the clustering experiments as part of her course project in January -- April 2026 semester. 
The Python codes were further improved with the help of  ChatGPT Go. 
AI tools from OverLeaf were used to improve readability and grammatical flow.

\bibliographystyle{amsplain}
\bibliography{refs}

\end{document}